\documentclass[a4paper,10pt]{article}
\usepackage[utf8]{inputenc}

\providecommand{\keywords}[1]{\textbf{\textit{keywords:}} #1}

\usepackage{float}

\usepackage{amsmath,amsfonts,amssymb,amsthm,epsfig,epstopdf,titling,url,array,tabularx}
\usepackage{caption,subcaption,sublabel}
\usepackage{multirow,multicol}
\usepackage{graphicx}
\usepackage{pgfplots}
\usepackage{algorithmic} 
\usepackage[boxruled,longend,linesnumbered,ruled]{algorithm2e}

\theoremstyle{definition}

\newtheorem{theorem}{Theorem}[section]

\newtheorem{lemma}{Lemma}[section]

\theoremstyle{remark}

\DeclareMathOperator{\Rel}{Re}
\DeclareMathOperator{\Imag}{Im}

\usepackage{authblk}

\title{Computationally efficient optimal control for unstable power system models}


\author[,1,3]{Mahtab Uddin\thanks{Corresponding author, mahtab@ins.uiu.ac.bd}}
\author[2]{M. Monir Uddin}
\author[3]{Md. Abdul Hakim Khan}

\affil[1]{Institute of Natural Sciences, United International University, Dhaka-1212, Bangladesh}
\affil[2]{Department of Mathematics and Physics, North south University, Dhaka-1229, Bangladesh}
\affil[3]{Department of Mathematics, Bangladesh University of Engineering \& Technology, Dhaka-1000, Bangladesh}

\date{}

\begin{document}
	
\maketitle             

\begin{abstract}
	
In this article, the focus is mainly on gaining the optimal control for the unstable power system models and stabilizing them through the Riccati-based feedback stabilization process with sparsity-preserving techniques. We are to find the solution of the Continuous-time Algebraic Riccati Equations (CAREs) governed from the unstable power system models derived from the Brazilian Inter-Connected Power System (BIPS) models, which are large-scale sparse index-1 descriptor systems. We propose the projection-based Rational Krylov Subspace Method (RKSM) for the iterative computation of the solution of the CAREs. The novelties of RKSM are sparsity-preserving computations and the implementation of time-convenient adaptive shift parameters. We modify the Low-Rank Cholesky-Factor integrated Alternating Direction Implicit (LRCF-ADI) technique based nested iterative Kleinman-Newton (KN) method to a sparse form and adjust this to solve the desired CAREs. We compare the results achieved by the Kleinman-Newton method with that of using the RKSM. The applicability and adaptability of the proposed techniques are justified numerically with MATLAB simulations. Transient behaviors of the target models are investigated for comparative analysis through the tabular and graphical approaches. 	    
	
\end{abstract}

\begin{center}
\keywords{Power system model, Optimal control, Riccati equation, Feedback stabilization, Sparsity preservation}
\end{center}

\section{Introduction}

For the practical purposes, optimal control is a vital part of the engineering interest e.g., industrial control systems, system defense strategy, voltage stability of the power systems, and signal processing \cite{huang2002new,angle2019identifying}. Multi-tasking systems having various components arise in many fields of engineering applications, such as microelectronics, micro-electro-mechanical systems, cybersecurity, computer control of industrial processes, communication systems, etc. These systems are composed of branches of sub-systems and are functioned by very large mathematical models utilizing the interrelated inner mathematical system of higher dimensions.

Power system models are one of the prime branches of the application of optimal controls. For the multi-connected power systems, avoiding cyber-attacks, ensuring the stability of the power connections, and maintaining the compatible frequency level are essential. For those inevitable issues, optimal controls have exigent roles \cite{cui2020deeply}.

The dynamic of a large-scale power system model can be described by the Differential Algebraic Equations (DAEs) as
\begin{equation} \label{eqn:powersystem}
\begin{aligned}
\dot{x}(t) & = f(x_1,x_2,P), \quad f:\mathbb{R}^{n_1 + n_2 + n_3}\rightarrow \mathbb{R}^{n_1},\\
0 & = g(x_1,x_2,P), \quad g:\mathbb{R}^{n_1 + n_2 + n_3}\rightarrow \mathbb{R}^{n_2},
\end{aligned}
\end{equation}
where $x_1\in X_1 \subset \mathbb{R}^{n_1}$ is the vector with differential variables, $x_2\in X_2 \subset \mathbb{R}^{n_2}$ is the vector with algebraic variables and $P \in \mathbb{R}^{n_3}$ is the vector of parameters with $n_1 + n_2 =n$ \cite{kose2003introduction,gross2018steady}. In the state-space representation, the dynamic state variables $x_1$ and instantaneous variables $x_2$ are defined for the specific system, where the parameter $P$ defines the configuration and the operation condition.  

The state variables in $x_1$ are time-dependent generator voltages and the parameter $P$ is composed of the system parameters. The control devices together form $ f(x_1,x_2,P)$ and the power flow balance form $g(x_1,x_2,P)$. In case of voltage stability, some equations of the $f(x_1,x_2,P)$ will not be considered. For a fixed parameter $P_0$, linearizing the system (\ref{eqn:powersystem}) around the equilibrium point will provide the following Linear Time-Invariant (LTI) continuous-time system with the input-output equations in the sparse form with the block matrices as  
\begin{equation}  \label{eqn:DS_matrix_vector}
\begin{aligned}
\underbrace{ \begin{bmatrix}  E_1  &  0 \\ 0  &  0   \end{bmatrix}}_E
\underbrace{ \begin{bmatrix}  \dot{x_1}(t)  \\ \dot{x_2}(t)    \end{bmatrix}}_{\dot{x}(t)}
&= \underbrace{\begin{bmatrix}  J_1 &  J_2 \\ J_3 & J_4 \end{bmatrix}}_A
\underbrace{ \begin{bmatrix}    x_1(t) \\ x_2(t)  \end{bmatrix}}_{x(t)}
+ \underbrace{\begin{bmatrix}    B_1 \\ B_2  \end{bmatrix}}_B u(t); x(t_0)=x_0,\\
y(t)&= \underbrace{\begin{bmatrix}  C_1  & C_2 \end{bmatrix}}_C
\begin{bmatrix}    x_1(t) \\ x_2(t)  \end{bmatrix} + D u(t),
\end{aligned}
\end{equation} 
where $E,A\in \mathbb{R}^{n\times n}$, $B\in \mathbb{R}^{n\times p}$, $C\in \mathbb{R}^{m\times n}$ and $D\in \mathbb{R}^{m\times p}$ with very large $n$ and $p,m\ll n$, represent differential coefficient matrix, state matrix, control multiplier matrix, state multiplier matrix and direct transmission map respectively \cite{mahmud2012full}. In the system (\ref{eqn:DS_matrix_vector}), $x(t)\in \mathbb{R}^n$ is the state vector and $u(t)\in \mathbb{R}^p$ is control (input), while $y(t)\in \mathbb{R}^m$ is the output vector and considering $x(t_0)=x_0$ as the initial state. In most of the state-space representations the direct transmission remains absent and because of that $D=0$. Since $E$ is singular (i.e. $\det(E)=0$), the system (\ref{eqn:DS_matrix_vector}) is called the descriptor system \cite{hossain2019iterative,benner2016reduced}. 

Here $x_1\in \mathbb{R}^{n_1}$, $x_2\in \mathbb{R}^{n_2}$ with $n_1+n_2 = n$ are state vectors and other sub-matrices are sparse in appropriate dimensions. If $E_1$ and $J_1$ have full rank, and $J_4$ is non-singular (i.e. $\det(A)\neq 0$), the system is called the index-1 descriptor system \cite{uddin2015computational}. In the current work, we will focus on the stabilization of index-1 descriptor system only. By proper substitution and elimination the descriptor system (\ref{eqn:DS_matrix_vector})  can be converted to the generalized LTI continuous-time system 
\begin{equation} \label{eqn:d-state-space}
\begin{aligned}
\mathcal{E}\dot{x}(t)&=\mathcal{A}x(t)+\mathcal{B}u(t),\\
y(t)&=\mathcal{C}x(t)+\mathcal{D}u(t),
\end{aligned}
\end{equation}
where, we have considered the following relations 
\begin{equation} \label{eqn:schur_complements}
\begin{aligned}
x:&=x_1, \quad \mathcal{E}:= E_1,\\
\mathcal{A}:&= J_1-J_2{J_4}^{-1}J_3, \quad  
\mathcal{B}:= B_1-J_2{J_4}^{-1}B_2,\\
\mathcal{C}:&= C_1-C_2{J_4}^{-1}J_3, \quad
\mathcal{D}:= D-C_2{J_4}^{-1}B_2.
\end{aligned}
\end{equation}

\begin{lemma} [\textbf{Equivalence of transfer functions \cite{gugercin2013model}}] \label{transfer_function}
Assume the transfer functions $G(s)=C(s E - A)^{-1} B + D$  and  $\mathcal{G}(s)=\mathcal{C}(s \mathcal{E} - \mathcal{A})^{-1} \mathcal{B} + \mathcal{D}$ are obtained from the index-1 descriptor system (\ref{eqn:DS_matrix_vector}) and the converted generalized system (\ref{eqn:d-state-space}), respectively. Then, the transfer functions $G(s)$ and $\mathcal{G}(s)$ are identical and hence those systems are equivalent.
\end{lemma}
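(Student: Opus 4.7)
The plan is to reduce the identity $G(s)=\mathcal{G}(s)$ to a direct block-matrix calculation, using the Schur-complement form of the inverse of $sE-A$. The index-1 hypothesis—namely that $J_4$ is nonsingular—is exactly what makes this Schur complement well defined, so it is the technical fact that drives the whole argument.

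First I would write
\[
sE-A=\begin{bmatrix} sE_1-J_1 & -J_2 \\ -J_3 & -J_4 \end{bmatrix},
\]
and invoke the standard block-inverse formula with the $(2,2)$-block as pivot. Because $J_4$ is invertible, the Schur complement of $-J_4$ in $sE-A$ is
\[
(sE_1-J_1)-(-J_2)(-J_4)^{-1}(-J_3)=sE_1-J_1-J_2J_4^{-1}J_3=s\mathcal{E}-\mathcal{A},
\]
matching the definitions in (\ref{eqn:schur_complements}). Writing out all four blocks of $(sE-A)^{-1}$ in terms of $(s\mathcal{E}-\mathcal{A})^{-1}$ and $J_4^{-1}$ (with careful sign tracking) is routine.

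Next I would compute $C(sE-A)^{-1}B$ using the block partitions $C=[C_1\ C_2]$ and $B=[B_1;\,B_2]$. Expanding the product and grouping terms, the contributions from the $(1,1)$, $(1,2)$, $(2,1)$ blocks of the inverse are exactly those needed to assemble
\[
\bigl(C_1-C_2J_4^{-1}J_3\bigr)(s\mathcal{E}-\mathcal{A})^{-1}\bigl(B_1-J_2J_4^{-1}B_2\bigr)=\mathcal{C}(s\mathcal{E}-\mathcal{A})^{-1}\mathcal{B},
\]
while the $(2,2)$-block contributes a residual $-C_2J_4^{-1}B_2$ that is independent of $s$. Adding the direct-feedthrough term $D$ converts this residual into $D-C_2J_4^{-1}B_2=\mathcal{D}$. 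Combining yields $G(s)=\mathcal{G}(s)$, which, since the transfer function characterizes the input-output behavior, implies equivalence of the two systems.

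The only real obstacle is keeping the signs and factor orderings straight in the block inversion, since every off-diagonal block of $sE-A$ carries a minus sign, producing compounded $(-1)^k$ factors. To insulate the argument from arithmetic slips, I would first derive the block-inverse formula in a clean notation (with generic $\alpha, \beta, \gamma, \delta$ blocks), then substitute $\alpha=sE_1-J_1$, $\beta=-J_2$, $\gamma=-J_3$, $\delta=-J_4$ at the very end. Once the bookkeeping is executed, the identification of $\mathcal{E}, \mathcal{A}, \mathcal{B}, \mathcal{C}, \mathcal{D}$ from (\ref{eqn:schur_complements}) appears automatically from the algebra, so no additional structural hypothesis beyond index-1 is needed.
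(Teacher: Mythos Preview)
Your Schur-complement approach is exactly the standard route to this result and is correct in outline. Note, however, that the paper does not actually prove this lemma: it is stated with a citation and no proof is given, so there is no ``paper's proof'' to compare against. The reference the authors cite establishes the identity by precisely the block-elimination argument you describe.

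One small slip worth flagging, since you yourself identified sign-tracking as the main hazard: in your displayed computation of the Schur complement you write
\[
(sE_1-J_1)-(-J_2)(-J_4)^{-1}(-J_3)=sE_1-J_1-J_2J_4^{-1}J_3,
\]
but $(-J_2)(-J_4)^{-1}(-J_3)=-J_2J_4^{-1}J_3$, so the middle expression should be $sE_1-J_1+J_2J_4^{-1}J_3$. Fortunately this \emph{is} $s\mathcal{E}-\mathcal{A}$ (since $\mathcal{A}=J_1-J_2J_4^{-1}J_3$), so your final identification is correct and the error is purely in the intermediate display. Your own suggestion to carry out the block inversion with neutral placeholders $\alpha,\beta,\gamma,\delta$ and substitute only at the end is the right cure.
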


Though the systems (\ref{eqn:DS_matrix_vector}) and (\ref{eqn:d-state-space}) are equivalent, the system (\ref{eqn:DS_matrix_vector}) is sparse and the system (\ref{eqn:d-state-space}) is dense. This explicit conversion is contradictory with the aim of the work and it will be bypassed by a more efficient way. 

LTI continuous-time systems are the pivot ingredient of the present control theory and many other areas of science and engineering \cite{benner2016structure}. Continuous-time Algebraic Riccati Equation (CARE) appears in many branches of engineering applications; especially in electrical fields \cite{chu2011solving,chen2016linear}. The CARE connected to the system (\ref{eqn:d-state-space}) is defined as
\begin{equation} \label{eqn:D-GCARE}
\begin{aligned}
\mathcal{A}^T X\mathcal{E}+\mathcal{E}^T X\mathcal{A}-\mathcal{E}^T X\mathcal{B}\mathcal{B}^T X\mathcal{E}+\mathcal{C}^T \mathcal{C}=0.
\end{aligned}
\end{equation} 

If the Hamiltonian matrix corresponding to the system (\ref{eqn:d-state-space}) has no pure imaginary eigenvalues, then the solution $X$ of the CARE (\ref{eqn:D-GCARE}) exists and unique \cite{abou2012matrix}. The solution $X$ of (\ref{eqn:D-GCARE}) is symmetric positive-definite and called stabilizing for the stable closed-loop matrix $\mathcal{A}- (\mathcal{B}\mathcal{B}^T)X\mathcal{E}$. Riccati-based feedback matrix has a prime role in the stabilization approaches for unstable systems \cite{uddin2019riccati,uddin2019efficient}. To find an optimal feedback matrix $K^o$, the Linear Quadratic Regulator (LQR) problem technique can be applied, where the cost functional is defined as
\begin{equation} \label{eqn:LQR}
\begin{aligned}
J(u,x_0)=\int_{0}^{\infty } (x^T(t) \mathcal{C}^T \mathcal{C} x(t)+u^T(t)u(t))dt.
\end{aligned}
\end{equation}

The cost functional (\ref{eqn:LQR}) can be optimized as $J(u^o,x_0)= {x_0}^T Xx_0$ by applying the optimal control $u^o = -K^o x(t)$ generated by the optimal feedback matrix $K^o = \mathcal{B}^T X\mathcal{E}$ associated with the solution matrix $X$ of the CARE (\ref{eqn:D-GCARE}). Using the optimal feedback matrix $K^o$, an unstable LTI continuous-time system can be optimally stabilized by replacing $\mathcal{A}$ by $\mathcal{A}_s = \mathcal{A}-\mathcal{B}K^o$. The stabilized system can be written as
\begin{equation} \label{eqn:st-state-space}
\begin{aligned}
\mathcal{E}\dot{x}(t) = \mathcal{A}_s x(t)+\mathcal{B}u(t), \\
y(t) = \mathcal{C}x(t)+\mathcal{D}u(t).
\end{aligned}
\end{equation}

The eligibility of Rational Krylov Subspace Method (RKSM) for the large-scale LTI continuous-time systems has discussed by Simoncini in \cite{simoncini2007new} and the application of adaptive RKSM to solve large-scale CARE for finding optimal control of the LTI systems has narrated by Druskin \textit{et al.} in \cite{druskin2011adaptive}. Analysis of the basic properties of RKSM for solving large-scale CAREs subject to LTI systems investigated by Simoncini in \cite{simoncini2016analysis}, where the author briefed a new concept of shift parameters that is efficient for the perturbed systems. Very detailed discussion on the numerical solution of large-scale CAREs and LQR based optimal control problems are given by Benner \textit{et al.} in \cite{benner2008numerical}, where the authors narrated the extensions Newton method by means of Alternating Direction Implicit (ADI) technique with convergence properties and the comparative analysis with the RKSM approach. For solving large-scale matrix equations, the Kleinman-Newton method based on Low-Rank Cholesky-Factor ADI (LRCF-ADI) technique is discussed by Kuerschner in \cite{kurschner2016efficient}. 

Since there is a scarcity of efficient computational solvers or feasible simulation tools for large-scale CAREs governed from the unstable power system models, the concentration of this work is to develop sparsity-preserving efficient techniques to find the solution of those CAREs. We are proposing a sparsity-preserving and rapid convergent form of the RKSM algorithm for finding solutions of CAREs associated with the unstable power system models and implementation of Riccati-based feedback stabilization. Also, a modified sparse form of LRCF-ADI based Kleinman-Newton method for solving CAREs subject to unstable power system models and corresponding stabilization approach is proposed. The proposed techniques are applied for the stabilization of the transient behaviors of unstable Brazilian Inter-Connected Power System (BIPS) models \cite{varricchio2015hybrid}. Moreover, the comparison of the computational results will be provided in both tabular and graphical methods.

\section{Preliminaries}\label{pre}

In this section, we discuss the background of the proposed techniques, which are derived for generalized LTI continuous-time systems. It includes the derivation of the Rational Krylov Subspace Method (RKSM) technique for solving the Riccati equation, basic structure of the Kleinman-Newton method, and derivation of Low-Rank Cholesky-Factor Alternative Direction Implicit (LRCF-ADI) method for solving Lyapunov equations. 

\subsection{Generalized RKSM technique for solving Riccati equations}

In \cite{simoncini2016analysis} Simoncini applied RKSM approach for solving the CARE in the form 
\begin{equation} \label{eqn:GCARE}
\begin{aligned}
A^T XE + E^T XA - E^T XBB^T XE + C^T C = 0,
\end{aligned}
\end{equation}
associated with the LTI continuous-time system 
\begin{equation} \label{eqn:g-state-space}
\begin{aligned}
E\dot{x}(t) = Ax(t)+Bu(t), \\
y(t) = Cx(t)+Du(t).
\end{aligned}
\end{equation}

If the eigenvalues of the matrix pair $(A,E)$ satisfy $\lambda_i +\bar{\lambda}_j \neq 0, \forall  i,j=1,2, \cdots ,m$, that ensures the solution $X$ of the CARE (\ref{eqn:GCARE}) exists and unique. The orthogonal projector $V\in \mathbb{R}^{n \times m}$ spanned by the $m$-dimensional rational Krylov subspace for a set of shift parameters $\mu_i\in \mathbb{C}^+; i=1,2, \cdots ,m$ is defined as
\small{\begin{align*}
\mathcal{K}_m=span \left(C^T, {(A^T-\mu_1 E^T)}^{-1}C^T,\cdots, \prod_{i=1}^{m} {(A^T-\mu_i E^T)}^{-1} C^T \right).
\end{align*} }

If  $\theta_j$ are the eigenvalues of $(V^T AV, V^T EV)$; $\mathbb{S}_m \in \mathbb{C}^+$ approximates the mirror eigenspace of $A-B(B^T XE)$ and $\delta \mathbb{S}_m$ is its border, the shifts are computed from
\begin{align*}
\mu_{i+1} = arg \left({\mathop{\mathrm{max}}_{\mu \in \delta \mathbb{S}_m} \left|\frac{\prod_{j=1}^i {(\mu-\mu_j)}}{\prod_{j=1}^i {(\mu-\theta_j)}} \right|}\right).
\end{align*}

According to the Galerkin condition and after simplification by matrix algebra, a low-rank CARE can be obtained as 
\begin{equation} \label{eqn:LR-GCARE}
\begin{aligned}
\hat{A}^T \hat{X}\hat{E} +\hat{E}^T \hat{X} \hat{A}-\hat{E}^T \hat{X} \hat{B}\hat{B}^T \hat{X}\hat{E}+ \hat{C}^T \hat{C}=0,
\end{aligned}
\end{equation}
where $\hat{X}=V^T XV, \hat{E}=V^T EV, \hat{A}=V^T AV, \hat{B}=V^T B$ and $\hat{C}=CV$. The equation (\ref{eqn:LR-GCARE}) is an approximated low-rank CARE and can be solved by any conventional method or MATLAB \texttt{care} command. Here $\hat{X}\in \mathbb{R}^{m \times m}$ is taken as low-rank approximation of $X$, corresponding to the low-rank CARE (\ref{eqn:LR-GCARE}). Then residual of the $m$-th iteration is
\begin{align*}
\|{\mathcal{R}_m}\|_F = \|{SJS^T}\|_F; \quad J&= \begin{bmatrix} 0 & 1 & 0 \\ 1 & 0 & 1 \\ 0 & 1 & 0 \end{bmatrix},
\end{align*}
where ${\|{.}\|}_F$ denotes the Frobenius norm and $S$ is a block upper triangular matrix in the QR factorization of the matrix $U$ derived as 
\begin{align*}
U ={ \begin{bmatrix} v_{m+1} \mu_{m+1} \\  E^T V_m \hat{X} H_m^{-T} e_m h_{m+1,m}^T  \\  -(I_n - V_m V_m^T) A^T v_{m+1} 
\end{bmatrix} }^T,
\end{align*}
where $H_m$ is a block upper Hessenberg matrix and $e_m$ is the matrix formed by the last $p$ columns of the $mp$-order identity matrix. For  $C^T = Q_0 R_0$ such that $R_0 = \beta_0$, the relative-residual can be estimated as
\begin{align*}
{\|{\mathcal{R}_m}\|}_{(relative)} = \frac{\|{\mathcal{R}_m\|}_F}{\|{\beta_0^T \beta_0}\|_F}.  
\end{align*}

Through RKSM the low-rank factor $Z$ of the approximate solution $X$ of the CARE (\ref{eqn:GCARE}) needs to be estimated, such that $X \approx Z Z^T $. The low-rank solution $\hat{X}$ is symmetric positive definite and the original solution $X$ can be approximated as $X=V \hat{X} V^T$. By the eigenvalue decomposition to the approximate solution $\hat{X}$ and truncating the negligible eigenvalues, the possible lowest order factor $Z$ of $X$ can be estimated as
\begin{align*}
X &= V \hat{X} V^T = V (T\Lambda T^T) V^T \\ &= V \begin{bmatrix} T_1 & T_2 \end{bmatrix} \begin{bmatrix} \Lambda_1 & 0 \\ 0 & \Lambda_2\end{bmatrix} \begin{bmatrix} T_1^T \\ T_2^T \end{bmatrix} V^T \\ &= VT_1 \Lambda_1 T_1^T V^T =  (VT_1 \Lambda_1^{\frac{1}{2}})(VT_1 \Lambda_1^{\frac{1}{2}})^T \\ &= ZZ^T.
\end{align*}

Here, $\Lambda_2$ consists the negligible eigenvalues. Summary of the above process is given in the Algorithm-\ref{pre:RKSM.}.
\begin{algorithm}[t]
\SetAlgoLined
\SetKwInOut{Input}{Input}
\SetKwInOut{Output}{Output}
\caption{Generalized RKSM.}
\label{pre:RKSM.}
\Input{ $E, A, B, C, i_{max}$ (number of iterations) and $\mu_i$ (initial shifts).}
\Output{Low-rank factored solution $Z$ such that $X\approx ZZ^T$.}  
Compute $Q_0 R_0 = C^T$ (QR factorization).\\
Choose $V_0 = Q_0$.\\  
\While{{not converged \ or \ $m \leq i_{\text{max}}$}}{%
Solve $v_{m+1} = {(A^T-\mu_{m+1} E^T)}^{-1}V_m$.\\
Compute shift for the next iteration.\\
Using Arnoldi algorithm orthogonalize $v_{m+1}$ against $V_m$ to obtain $\hat{v}_{m+1}$, such that $V_{m+1}=\begin{bmatrix} V_m, \hat{v}_{m+1} \end{bmatrix}$.\\
Assuming $\hat{E}=V_{m+1}^T EV_{m+1}$, $\hat{A}=V_{m+1}^T AV_{m+1}$, $\hat{B}=V_{m+1}^TB$ and $\hat{C}=CV_{m+1}$, for $\hat{X}$ solve the reduced-order Riccati equation $\hat{A}^T \hat{X} \hat{E}+ \hat{E}^T \hat{X} \hat{A}- \hat{E}^T \hat{X} \hat{B} \hat{B}^T \hat{X} \hat{E} + \hat{C}^T \hat{C}=0$.\\
Compute ${\|{\mathcal{R}_m}\|}_{(\text{relative})}$ for convergence.\\
}
Compute eigenvalue decomposition $\hat{X} = T\Lambda T^T=\begin{bmatrix} T_1 & T_2 \end{bmatrix} \begin{bmatrix} \Lambda_1 & 0 \\ 0 & \Lambda_2\end{bmatrix} \begin{bmatrix} T_1^T \\ T_2^T \end{bmatrix}$.\\
For negligible eigenvalues truncate $\Lambda_2$ and compute $Z=V_{m+1} T_1 \Lambda_1^{\frac{1}{2}}$. \\
\end{algorithm}

\subsection{Modified LRCF-ADI method for solving Lyapunov equations}

The generalized Continuous-time Algebraic Lyapunov Equation (CALE) associated with the LTI continuous-time system (\ref{eqn:g-state-space}) is 
\begin{equation} \label{eqn:GCALE}
\begin{aligned}
A^T XE+ E^TXA =- C^T C.
\end{aligned}
\end{equation}

The shift parameters $\mu_i \in \mathbb{C}^-$ are allowed and the initial iteration is taken as $X_0={X_0}^T \in \mathbb{R}^{n\times n}$. Assume $Z_i \in \mathbb{R}^{n\times ip}$ as the low-rank Cholesky factor of $X_i \in \mathbb{R}^{n\times n}$ such that $X_i=Z_i Z_i^T$ \cite{li2000model}. The ADI algorithm can be formed in terms of Cholesky factor $Z_i$ of $X_i$ and there will be no need to estimate or store $X_i$ at each iteration as only $Z_i$ is required \cite{penzl1999cyclic}. 

Considering $\gamma_i =\sqrt{-2\Rel(\mu_i)}$, the conventional low-rank Cholesky factor ADI iterations yield the form as
\begin{equation} \label{eqn:ADI-factor4}
\begin{aligned}
V_1 &= (A^T+\mu_1 E^T)^{-1}C^T,\\
Z_1 &=\gamma_1 V_1=\sqrt{-2\Rel(\mu_i)} (A^T+\mu_1 E^T)^{-1}C^T,\\
V_i &=H_{i-1,i} V_{i-1}=V_{i-1}-(\mu_i + \bar{\mu}_{i-1})(A^T+\mu_i E^T)^{-1} E^T V_{i-1},\\
Z_i &=\begin{bmatrix} Z_{i-1} \quad \gamma_i V_i \end{bmatrix}=\begin{bmatrix} Z_{i-1} \quad \sqrt{-2\Rel(\mu_i)} V_i \end{bmatrix}.
\end{aligned}
\end{equation}

Assume a set of adjustable shift parameters, for two subsequent block iterates $V_i, V_{i+1}$  of the ADI technique related to the pair of complex conjugated shifts $\{ \mu_i, \mu_{i+1}:=\bar{\mu}_i\}$ it holds 
\begin{equation} \label{eqn:t1}
\begin{aligned}
V_{i+1}=\bar{V}_i + 2\delta_i \text{Im}(V_i),
\end{aligned}
\end{equation}
where $\bar{(.)}$ indicates the complex conjugate with $\delta_i=\frac{\text{Re}(\mu_i)}{\text{Im}(\mu_i)}$ and iterates associated to real shifts are always purely real \cite{benner2013efficient}.

Then the following matrix for the basis extension can be obtained
\begin{equation} \label{eqn:iterations}
\begin{aligned}
\begin{bmatrix} V_i \ V_{i+1} \end{bmatrix}=\begin{bmatrix} \gamma_i (\Rel(V_i)+\delta_i \Imag(V_i)) \quad \gamma_i \sqrt{\delta_i^2 + 1} \Imag(V_i)\end{bmatrix}. 
\end{aligned}
\end{equation}

Then, for a pair of complex conjugate shifts at any iteration, the low-rank factor $Z_i$ can be computed as
\begin{equation} \label{eqn:ADI-factor5}
\begin{aligned}
Z_{i+1}=\begin{bmatrix}Z_{i-1} \ \gamma_i (\Rel(V_i)+\delta_i \Imag(V_i)) \quad \gamma_i \sqrt{\delta_i^2 + 1} \Imag(V_i)\end{bmatrix}. 
\end{aligned}
\end{equation}

The modified techniques discussed above is summarized in Algorithm-(\ref{pre:G-LRCF-ADI2}).
\begin{algorithm}[t]
\SetAlgoLined
\SetKwInOut{Input}{Input}
\SetKwInOut{Output}{Output}
\caption{Modified LRCF-ADI.}
\label{pre:G-LRCF-ADI2}
\Input{ $E, A, C,  i_{max}$ (number of iterations) and shift parameters $\{\mu_j\}_{j=1}^{i_{max}}$.}
\Output{Low-rank Cholesky-factor $Z$ such that $X\approx ZZ^T$.}  
Consider $Z_0=[ \ ]$.\\
\For {$i \gets 1$ to $i_{max}$} {
\eIf{$i=1$}{Solve $V_1=(A^T+\mu_1 E^T)^{-1} C^T$.}
{Compute $V_i=V_{i-1}-(\mu_i + \bar{\mu}_{i-1})(A^T+\mu_i E^T)^{-1} E^T V_{i-1}$.}
\eIf{$\text{Im}(\mu_i)=0$}{Update $Z_i=\begin{bmatrix} Z_{i-1} \quad \sqrt{-2\mu_i} V_i \end{bmatrix}$.}
{Assume $\gamma_i =\sqrt{-2\text{Re}(\mu_i)}$,  $\delta_i=\frac{\text{Re}(\mu_i)}{\text{Im}(\mu_i)}$,\\
Update $Z_{i+1}=\begin{bmatrix}Z_{i-1} \quad \gamma_i (\text{Re}(V_i)+\delta_i \text{Im}(V_i)) \quad \gamma_i \sqrt{\delta_i^2 + 1} \text{Im}(V_i) \end{bmatrix}$,\\
Compute $V_{i+1}=\bar{V}_i + 2\delta_i \ \text{Im}(V_i)$.\\
$i=i+1$
}
}
\end{algorithm}

\subsection{Basic Kleinman-Newton method}

For the system (\ref{eqn:g-state-space}), the residual $\mathcal{R}(X)$ of the CARE (\ref{eqn:GCARE}) is 
\begin{equation} \label{eqn:n-res}
\begin{aligned}
\mathcal{R}(X)=A^T X E + E^T XA - E^T XBB^T X E + C^T C.
\end{aligned}
\end{equation}  

Form the Fre'chet derivative at $X=X_i$, we have 
\begin{equation} \label{eqn:def-res}
\begin{aligned}
\mathcal{R}'(X_i)=(A-BB^T X E)^T X_i E + E^T X_i (A-BB^T X E).
\end{aligned}
\end{equation} 

Consider the Newton iteration and apply (\ref{eqn:def-res}), we have
\begin{equation} \label{eqn:newton}
\begin{aligned}
\mathcal{R}'(\Delta X_i)+\mathcal{R}(X_i)=0.
\end{aligned}
\end{equation}

Now, put $\Delta X_i = X_{i+1} - X_i$ in (\ref{eqn:newton}). Then, after simplification we get 
\begin{equation} \label{eqn:n-care}
\begin{aligned}
& (A-BB^T X_i E)^T X_{i+1} E + E^T X_{i+1} (A-BB^T X_i E) \\ &= -C^T C-E^T X_i BB^T X_i E.
\end{aligned}
\end{equation} 

Then, assume $\tilde{A}_i=A-BB^T X_i E$ and $W_i= \begin{bmatrix}  C^T \quad  E^T X_i B \end{bmatrix}$, then equation (\ref{eqn:n-care}) reduces to a generalized Lyapunov equation such as
\begin{equation} \label{eqn:n-lyap}
\begin{aligned}
\tilde{A}_i^T X_{i+1} E + E^T X_{i+1} \tilde{A}_i = - W_i W_i^T.
\end{aligned}
\end{equation}

The generalized CALE (\ref{eqn:n-lyap}) can be solved for $X_{i+1}$ by any conventional method, such as Low-Rank Cholesky-Factor Alternative Direction Implicit (LRCF-ADI) method and the corresponding feedback matrix $K_{i+1}=B^T X_{i+1}E$ can be estimated. The whole mechanism is called the Kleinman-Newton method \cite{benner2014numerical} for solving generalized CARE. The summary of the method is given in Algorithm-\ref{pre:KN.}.
\begin{algorithm}[t]
\SetAlgoLined
\SetKwInOut{Input}{Input}
\SetKwInOut{Output}{Output}
\caption{Basic Kleinman-Newton.}
\label{pre:KN.}
\Input{$E,A,B,C$ and $X_0$ (initial assumption).}
\Output{Approximate solution $X$ and feedback matrix $K$.}  
\While{ $i \leq i_{\text{max}} $}{%
Compute $\tilde{A}_i=A-BB^T X_i E$ and $W_i= \begin{bmatrix} C^T \quad     E^T X_i B \end{bmatrix}$;\\
For $X_{i+1}$, solve $\tilde{A}_i^T X_{i+1} E + E^T X_{i+1} \tilde{A}_i = - W_i W_i^T$;\\
Compute $K_{i+1}=B^T X_{i+1} E$.
}  
\end{algorithm} 

\section{Solving Riccati equations arising from the index-1 descriptor systems}

In this section, we discuss the updated RKSM techniques for solving the Riccati equation derived from index-1 descriptor systems, it includes the stopping criteria, sparsity preservation, and estimation of the optimal feedback matrix. Then, LRCF-ADI based Kleinman-Newton method with the adjustment for index-1 descriptor systems and finding the optimal feedback matrix are discussed. Finally, the stabilized system is narrated accordingly.  

\subsection{Updated Rational Krylov subspace method}

Let us consider the LTI continuous-time system (\ref{eqn:d-state-space}) and introduce an orthogonal projector $V$ spanned by the $m$ dimensional rational Krylov subspace for a set of given shift parameters $\mu_i\in \mathbb{C}^+; i=1,2,...,m$ is defined as
\small{\begin{align*}
\mathcal{K}_m=span \left(\mathcal{C}^T, {(\mathcal{A}^T-\mu_1 \mathcal{E}^T)}^{-1}\mathcal{C}^T,...,\prod_{i=1}^{m} {(\mathcal{A}^T-\mu_i \mathcal{E}^T)}^{-1} \mathcal{C}^T \right).
\end{align*} }

Again, consider the CARE (\ref{eqn:D-GCARE}) and apply the Galerkin condition on it. Then, after the simplification by matrix algebra, a low-rank CARE can be achieved as 

\begin{equation} \label{eqn:LR-DCARE}
\begin{aligned}
\hat{\mathcal{A}}^T \hat{X} \hat{\mathcal{E}}+\hat{\mathcal{E}}^T \hat{X} \hat{\mathcal{A}}-\hat{\mathcal{E}}^T \hat{X} \hat{\mathcal{B}} \hat{\mathcal{B}}^T \hat{X} \hat{\mathcal{E}}+\hat{\mathcal{C}}^T \hat{\mathcal{C}}=0,
\end{aligned}
\end{equation}
where, $\hat{X}=V^T XV, \hat{\mathcal{E}}=V^T \mathcal{E}V, \hat{\mathcal{A}}=V^T \mathcal{A}V, \hat{\mathcal{B}}=V^T \mathcal{B}$ and $\hat{\mathcal{C}}=\mathcal{C}V$. The equation (\ref{eqn:LR-DCARE}) is a low-rank CARE and can be solved by MATLAB \texttt{care} command or any existing methods, such as \texttt{Schur}-decomposition method. 

For the quick and smooth convergence of the proposed algorithm, adjustable shift selection is crucial and we are adopting the adaptive shift approach for index-1 descriptor systems \cite{benner2014self}. This process required to be recursive and in each step the subspace to all the projector $V$ generated with the current set of shifts will be extended.

\subsubsection{Stopping criteria and related theorem}

Arnoldi relation is a very essential tool for the computation of residual of the RKSM iterations. To avoid extra matrix-vector multiplies per iteration, the computation of projected matrix $T_m$ can be performed more efficiently than the explicit product $T_m = V_m^T \mathcal{A}V_m$. To find the stopping criteria, we have to consider following lemmas.

\begin{lemma} [\textbf{Arnoldi relation \cite{abidi2016adaptive}}] \label{a.relation}
Let $\mathcal{K}_m = \text{span} (V_m)$ be the rational Krylov subspace with the shift parameters $\mu_i\in \mathbb{C}^+; i=1,2, \cdots ,m$. Then $\mathcal{K}_m$ satisfies the Arnoldi relation as follows
\begin{equation} \label{eqn:a.relation1}
\begin{aligned}
\mathcal{A}^TV_m=V_m T_m + \hat{v}_{m+1} g_m^T \ ; \quad V_m^T V_m = I_m,
\end{aligned}
\end{equation}
where $\hat{v}_{m+1} \beta_1 = v_{m+1} \mu_{m+1} - (I_n - V_m V_m^T) \mathcal{A}^T v_{m+1}$ is the QR decomposition of the right hand side matrix with $g_m^T = \beta_1 h_{m+1,m} e_m^T H_m^{-1}$. 
\end{lemma}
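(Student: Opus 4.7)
The plan is to derive the Arnoldi-type decomposition directly from the rational Arnoldi construction of $V_m$, then split $\mathcal{A}^T V_m$ via an orthogonal-projector argument. First, the orthonormality $V_m^T V_m = I_m$ is immediate from the construction: at each step $j$ the vector $(\mathcal{A}^T - \mu_{j+1}\mathcal{E}^T)^{-1} V_j$ is orthogonalized against $V_j$ by block modified Gram--Schmidt, and the coefficients produced are precisely the entries of the block upper Hessenberg matrix $H_m$, with the subdiagonal block $h_{m+1,m}$ being the norm of the orthogonalized vector.

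Next, I would write $\mathcal{A}^T V_m = V_m T_m + (I_n - V_m V_m^T)\mathcal{A}^T V_m$ with $T_m := V_m^T \mathcal{A}^T V_m$, which handles the in-subspace piece and reduces the task to analyzing the residual $R_m := (I_n - V_m V_m^T)\mathcal{A}^T V_m$. The nesting property of the rational Krylov subspace implies that for $j < m$ the image $\mathcal{A}^T v_j$ already lies in $\mathcal{K}_m$: rearranging the defining recurrence $(\mathcal{A}^T - \mu_{j+1}\mathcal{E}^T)^{-1} v_j \in \mathcal{K}_{j+1}$ expresses $\mathcal{A}^T v_j$ as a combination of elements of $\mathcal{K}_{j+1} \subseteq \mathcal{K}_m$ plus a $\mu_{j+1}\mathcal{E}^T$-term that is absorbed by the same subspace. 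Consequently $R_m$ is supported only on its last block column of width $p$.

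To extract that last column I would invoke the recurrence defining $v_{m+1}$: writing the shift-and-invert relation $(\mathcal{A}^T - \mu_{m+1}\mathcal{E}^T) v_{m+1} \in \mathcal{K}_m$ and applying $(I_n - V_m V_m^T)$ eliminates the $\mathcal{K}_m$-part, leaving exactly the $n \times p$ block $v_{m+1} \mu_{m+1} - (I_n - V_m V_m^T)\mathcal{A}^T v_{m+1}$ appearing in the statement. Its QR factorization as $\hat{v}_{m+1} \beta_1$ furnishes the orthonormal direction $\hat{v}_{m+1}$, and unrolling the block-Hessenberg bookkeeping -- how the new direction is scaled by $h_{m+1,m}$ and how $H_m^{-1}$ then redistributes those coefficients across the columns of $V_m$ -- yields the claimed identity $g_m^T = \beta_1 h_{m+1,m} e_m^T H_m^{-1}$.

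The main obstacle will be the second step: cleanly verifying that $\mathcal{A}^T v_j \in \mathcal{K}_m$ for $j < m$, because $V_m$ is generated by shift-and-invert operators $(\mathcal{A}^T - \mu_i \mathcal{E}^T)^{-1}$ while the decomposition to be established is stated in terms of $\mathcal{A}^T$ alone. The algebra has to absorb all stray $\mu_{m+1}\mathcal{E}^T$ terms, and the invertibility of $H_m$ -- inherited from the non-confluence of the shifts and the non-singularity of each $\mathcal{A}^T - \mu_i \mathcal{E}^T$ -- is essential for the closed-form expression of $g_m^T$ to be well-defined.
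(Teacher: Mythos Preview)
The paper does not supply a proof of this lemma at all: it is quoted verbatim with a citation to \cite{abidi2016adaptive} and then invoked as a black box in the proof of Theorem~\ref{Trksm}. There is therefore no argument in the paper against which to compare your outline.

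On the outline itself, the orthogonal splitting $\mathcal{A}^T V_m = V_m(V_m^T\mathcal{A}^T V_m)+(I_n-V_mV_m^T)\mathcal{A}^T V_m$ is of course valid, but the step you single out as the ``main obstacle'' is a genuine gap, not a bookkeeping detail. Your rearrangement of the recurrence does not do what you claim: from $w:=(\mathcal{A}^T-\mu_{j+1}\mathcal{E}^T)^{-1}v_j\in\mathcal{K}_{j+1}$ one obtains $\mathcal{A}^T w = v_j+\mu_{j+1}\mathcal{E}^T w$, which controls $\mathcal{A}^T w$, not $\mathcal{A}^T v_j$. More concretely, for a generator one has $\mathcal{A}^T(\mathcal{A}^T-\mu\mathcal{E}^T)^{-1}\mathcal{C}^T=\mathcal{C}^T+\mu\,\mathcal{E}^T(\mathcal{A}^T-\mu\mathcal{E}^T)^{-1}\mathcal{C}^T$, and the second term is $\mathcal{E}^T$ times an element of $\mathcal{K}_m$; when $\mathcal{E}\neq I$ that is \emph{not} absorbed by $\mathcal{K}_m$. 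So the column-by-column argument that $(I_n-V_mV_m^T)\mathcal{A}^T v_j=0$ for $j<m$ fails in the generalized setting treated here.

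The way this is handled in the literature (and implicitly by Lemma~\ref{projected}) is to assemble all $m$ shift-and-invert recurrences into a single matrix identity of the form $\mathcal{A}^T V_{m+1}\underline{H}_m = V_m + \mathcal{E}^T V_{m+1}\underline{H}_m D_m$, with $D_m=\diag(\mu_1 I_p,\dots,\mu_m I_p)$, and then right-multiply by $H_m^{-1}$. The $\mathcal{E}^T$-contributions are absorbed into the definition of $T_m$ (this is exactly the content of equation~\eqref{eqn:projected1}), and only the $(m{+}1)$-st block row of $\underline{H}_m$ survives, producing the factor $h_{m+1,m}e_m^T H_m^{-1}$ in $g_m^T$. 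In other words, the low rank of the residual comes from the global inversion of $H_m$, not from $\mathcal{A}^T v_j$ lying in $\mathcal{K}_m$ column by column.
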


\begin{lemma} [\textbf{Building the projected matrix \cite{lin2013minimal}}] \label{projected}
Let the column vectors of $V_m$ be an orthonormal basis of the rational Krylov subspace $\mathcal{K}_m$ with the block diagonal matrix $D_m = \text{diag} (\mu_1 \mathcal{E}^T, \mu_2 \mathcal{E}^T, \cdots , \mu_m \mathcal{E}^T)$, where $\{ \mu_1, \mu_2, \cdots , \mu_m \}$ is the set of shift parameters used in the algorithm. Then for the projected matrix $T_m = V_m^T \mathcal{A} V_m$ the following relation holds
\begin{equation} \label{eqn:projected1}
\begin{aligned}
T_m = \left[ I_m + V_m^T \mathcal{E}^T V_m H_m D_m - V_m^T \mathcal{A}^T v_{m+1} h_{m+1,m} e_m^T \right] H_m^{-1}.
\end{aligned}
\end{equation}
\end{lemma}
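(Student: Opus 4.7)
My plan is to unwrap the rational Krylov construction into an explicit matrix recurrence and then project it onto the orthonormal basis $V_m$. The starting point is the defining step of the algorithm: at iteration $j$ the new vector comes from $(\mathcal{A}^T - \mu_j \mathcal{E}^T)^{-1} v_{j-1}$ followed by orthogonalization against $V_{j-1}$, with the resulting coefficients stored in the $j$-th column of an extended upper Hessenberg factor $\underline{H}_m$. Multiplying through by $(\mathcal{A}^T - \mu_j \mathcal{E}^T)$ and collecting across $j$ yields the workhorse identity of the schematic form
\[
\mathcal{A}^T V_{m+1} \underline{H}_m \;=\; V_m + \mathcal{E}^T V_{m+1} \underline{H}_m D_m,
\]
where $D_m$ packages the shifts in the block pattern specified by the statement of the lemma.

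Next, I would split $\underline{H}_m$ into its leading square block $H_m$ and its trailing row $h_{m+1,m} e_m^T$, so that $V_{m+1} \underline{H}_m = V_m H_m + v_{m+1} h_{m+1,m} e_m^T$. Substituting, isolating $\mathcal{A}^T V_m H_m$ on the left, and sending the $v_{m+1}$ contributions to the right gives
\[
\mathcal{A}^T V_m H_m \;=\; V_m + \mathcal{E}^T V_m H_m D_m + \mathcal{E}^T v_{m+1} h_{m+1,m} e_m^T D_m - \mathcal{A}^T v_{m+1} h_{m+1,m} e_m^T.
\]
Left-multiplying by $V_m^T$ and using $V_m^T V_m = I_m$, together with the $\mathcal{E}$-weighted orthogonality $V_m^T \mathcal{E}^T v_{m+1} = 0$ that is built into the rational Krylov orthogonalization step, kills the $\mathcal{E}^T v_{m+1}$ boundary term. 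Finally, right-multiplying by $H_m^{-1}$ and identifying $T_m$ with $V_m^T \mathcal{A}^T V_m$ via the Arnoldi relation of Lemma~\ref{a.relation} produces the stated formula.

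The main obstacle is justifying $V_m^T \mathcal{E}^T v_{m+1} = 0$, since it does not follow from plain Arnoldi orthonormality $V_m^T V_m = I_m$; it has to be extracted from the detailed structure of the rational Krylov step, where $v_{m+1}$ is constructed as a $(\mathcal{A}^T - \mu_{m+1} \mathcal{E}^T)^{-1}$-image of an element of the previous subspace, so that the $\mathcal{E}^T$-part of $v_{m+1}$ is already accounted for by $\mathcal{A}^T V_{m+1} \underline{H}_m$ modulo pieces that the orthogonalization removes. A subsidiary bookkeeping point is the interpretation of the block-diagonal $D_m = \diag(\mu_1 \mathcal{E}^T, \ldots, \mu_m \mathcal{E}^T)$: when $\mathcal{C}^T$ carries several columns the Hessenberg factor becomes block-Hessenberg and the shifts act on $p \times p$ blocks, so one has to check that the dimensions of $V_m^T \mathcal{E}^T V_m H_m D_m$ and of the $v_{m+1}$-terms line up consistently with the scalar-shift derivation sketched above before collapsing everything into the compact identity of the lemma.
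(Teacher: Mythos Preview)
The paper does not prove this lemma at all: it is stated with a citation to \cite{lin2013minimal} and used as a black box in the proof of Theorem~\ref{Trksm}. So there is no ``paper's own proof'' to compare against; your derivation is the standard route one takes for such projected-matrix identities in rational Krylov methods, and your matrix recurrence and splitting of $\underline{H}_m$ are set up correctly.

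That said, the step you flag as ``the main obstacle'' is a genuine gap, and your proposed fix does not close it. With the orthogonalization actually used in the paper (ordinary Arnoldi/Gram--Schmidt in the Euclidean inner product, cf.\ Algorithm~\ref{pre:RKSM.} and Algorithm~\ref{pre:RKSM-FB.}), one has $V_m^T v_{m+1}=0$ but \emph{not} $V_m^T \mathcal{E}^T v_{m+1}=0$ in general. Your argument that ``the $\mathcal{E}^T$-part of $v_{m+1}$ is already accounted for by $\mathcal{A}^T V_{m+1}\underline{H}_m$ modulo pieces that the orthogonalization removes'' does not produce this $\mathcal{E}$-orthogonality: the orthogonalization removes Euclidean components, not $\mathcal{E}^T$-weighted ones. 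After left-multiplying your identity by $V_m^T$ you are left with the residual term $V_m^T \mathcal{E}^T v_{m+1}\, h_{m+1,m} e_m^T D_m$, which only vanishes automatically when $\mathcal{E}=I$ (the setting of the cited reference) or when the basis is built to be $\mathcal{E}$-orthogonal. For the generalized pencil $(\mathcal{A},\mathcal{E})$ with Euclidean Arnoldi, the correct identity either carries this extra boundary term or requires replacing the inner product; you should state explicitly which of these two routes you take rather than asserting the term disappears.
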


\begin{theorem}  [\textbf{Residual of the RKSM iterations}] \label{Trksm}
Let $V_m$ be the orthogonal projector spanned by the rational Krylov subspace $\mathcal{K}_m$ and $X\approx V\hat{X} V^T$ is the solution of the CARE (\ref{eqn:D-GCARE}) using the low-rank solution $\hat{X}$. Then, the residual of $m$-th iteration can be computed as 
\begin{equation} \label{eqn:trksm1}
\begin{aligned}
{\|{\mathcal{R}_m}\|}_F={\|{ S J S^T}\|}_F \ ; \quad J=\begin{bmatrix} 0 & I & 0 \\ I & 0 & I \\  0 & I & 0 \end{bmatrix},
\end{aligned}
\end{equation}
where ${\|{.}\|}_F$ denotes the Frobenius norm and $S$ is a block upper triangular matrix in the QR factorization of the matrix $U$ is defined as 
\begin{equation} \label{eqn:trksm2}
\begin{aligned}
U = \begin{bmatrix} v_{m+1} \mu_{m+1} \quad \mathcal{E}^T V_m \hat{X} H_m^{-T} e_m h_{m+1,m}^T \quad -(I_n - V_m V_m^T) \mathcal{A}^T v_{m+1} \end{bmatrix}.
\end{aligned}
\end{equation}
\end{theorem}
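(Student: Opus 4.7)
The plan is to expand the CARE residual $\mathcal{R}_m=\mathcal{A}^T X_m\mathcal{E}+\mathcal{E}^T X_m\mathcal{A}-\mathcal{E}^T X_m\mathcal{B}\mathcal{B}^T X_m\mathcal{E}+\mathcal{C}^T\mathcal{C}$ of (\ref{eqn:D-GCARE}) at the Galerkin ansatz $X_m:=V_m\hat{X}V_m^T$, and then collapse it via the two structural lemmas above to a rank-revealing outer product that is compressed by a QR factorization. First I would substitute $X_m$, use the fact that $\mathcal{C}^T\in\mathcal{K}_m=\textrm{range}(V_m)$ to write $\mathcal{C}^T\mathcal{C}=V_m\hat{\mathcal{C}}^T\hat{\mathcal{C}}V_m^T$, and apply Lemma~\ref{a.relation} in the form $\mathcal{A}^TV_m=V_m\hat{\mathcal{A}}^T+\hat{v}_{m+1}g_m^T$ (the identity $T_m=\hat{\mathcal{A}}^T$ follows from left-multiplying the Arnoldi relation by $V_m^T$). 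This splits each $\mathcal{A}$-factor in $\mathcal{R}_m$ into an in-range$(V_m)$ piece and a cross-term along $\hat{v}_{m+1}$.

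Next, since $\hat{X}$ solves the projected CARE (\ref{eqn:LR-DCARE}), multiplying that identity on the left by $V_m$ and on the right by $V_m^T$ annihilates the fully in-subspace contribution of the expansion. What survives are the Arnoldi cross-terms $\hat{v}_{m+1}g_m^T\hat{X}V_m^T\mathcal{E}+\mathcal{E}^TV_m\hat{X}g_m\hat{v}_{m+1}^T$, together with the off-projector differences $V_m\hat{\mathcal{A}}^T\hat{X}V_m^T\mathcal{E}(I-V_mV_m^T)+(I-V_mV_m^T)\mathcal{E}^TV_m\hat{X}\hat{\mathcal{A}}V_m^T$ (arising from $V_m^T\mathcal{E}\neq\hat{\mathcal{E}}V_m^T$) and the quadratic mismatch $-\mathcal{E}^TV_m\hat{X}\hat{\mathcal{B}}\hat{\mathcal{B}}^T\hat{X}V_m^T\mathcal{E}+V_m\hat{\mathcal{E}}^T\hat{X}\hat{\mathcal{B}}\hat{\mathcal{B}}^T\hat{X}\hat{\mathcal{E}}V_m^T$. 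Lemma~\ref{projected}, which expresses $T_m$ through $H_m,D_m,h_{m+1,m}$ and so couples the actions of $\mathcal{A}^T$ and $\mathcal{E}^T$ on $V_m$, is the key identity that lets one absorb these extra pieces into the same $\hat{v}_{m+1}$-direction.

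At this point the remaining contribution is only the cross-term pair, and substituting $g_m^T=\beta_1 h_{m+1,m}e_m^TH_m^{-1}$ together with $\hat{v}_{m+1}\beta_1=v_{m+1}\mu_{m+1}-(I_n-V_mV_m^T)\mathcal{A}^Tv_{m+1}$ (both from Lemma~\ref{a.relation}) yields
\begin{align*}
\hat{v}_{m+1}g_m^T\hat{X}V_m^T\mathcal{E}
&=\bigl[v_{m+1}\mu_{m+1}-(I_n-V_mV_m^T)\mathcal{A}^Tv_{m+1}\bigr]\,h_{m+1,m}e_m^TH_m^{-1}\hat{X}V_m^T\mathcal{E}\\
&=(U_1+U_3)\,U_2^T,
\end{align*}
where $U_1,U_2,U_3$ are precisely the three column blocks of $U$ defined in (\ref{eqn:trksm2}). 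Adding the symmetric transpose gives $\mathcal{R}_m=(U_1+U_3)U_2^T+U_2(U_1+U_3)^T=UJU^T$ with $J$ as in (\ref{eqn:trksm1}). A thin QR factorization $U=QS$ (with $Q^TQ=I$ and $S$ block upper triangular) then finishes the proof, since by unitary invariance of the Frobenius norm $\|\mathcal{R}_m\|_F=\|QSJS^TQ^T\|_F=\|SJS^T\|_F$.

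The hard part will be the cancellation in the second step. In the pure Lyapunov analogue (no $\mathcal{B}\mathcal{B}^T$-term) this is essentially immediate once the Arnoldi relation and the Galerkin condition are combined, but for the CARE one has to verify that the symmetric sandwich $-M+V_mV_m^TM\,V_mV_m^T$ with $M=\mathcal{E}^TV_m\hat{X}\hat{\mathcal{B}}\hat{\mathcal{B}}^T\hat{X}V_m^T\mathcal{E}$, together with the two linear off-projector remainders, collapses precisely onto the rank-$p$ $\hat{v}_{m+1}$-direction so that no extra block appears in $U$. This matching is exactly where Lemma~\ref{projected} is indispensable and is the technical heart of the argument.
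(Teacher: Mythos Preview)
Your plan follows essentially the same route as the paper's proof: substitute the Galerkin ansatz $X_m=V_m\hat{X}V_m^T$ into the CARE residual, replace $\mathcal{A}^TV_m$ by the Arnoldi relation of Lemma~\ref{a.relation}, invoke the projected CARE (\ref{eqn:LR-DCARE}) to kill the in-subspace part, and recognise the surviving cross-terms as $(U_1+U_3)U_2^T+U_2(U_1+U_3)^T=UJU^T$ before compressing with a thin QR factorisation. The paper carries out exactly this computation, writing the in-subspace block as $\mathcal{M}^TX\mathcal{E}+\mathcal{E}^TX\mathcal{M}-\mathcal{E}^TX\mathcal{B}\mathcal{B}^TX\mathcal{E}+\mathcal{C}^T\mathcal{C}$ with $\mathcal{M}=\mathcal{A}^TV_mV_m^T$ and declaring it zero.

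Where you are actually \emph{more} careful than the paper is in the cancellation step: you correctly flag the off-projector pieces coming from $V_m^T\mathcal{E}\neq\hat{\mathcal{E}}V_m^T$ and the quadratic $\mathcal{B}\mathcal{B}^T$ mismatch, and you point to Lemma~\ref{projected} as the tool for absorbing them. The paper's own proof does not spell out this verification; it simply writes ``$+\,0$'' at that point and moves on. So the ``hard part'' you identify is precisely the step the paper asserts without detail. Your explicit handling of the final QR step via unitary invariance of the Frobenius norm is also cleaner than the paper, which just states $\mathcal{R}_m=SJS^T$ at the end. In short: same strategy, and your outline is, if anything, a more honest accounting of where the work lies.
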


\begin{proof}
Assume $f = (I_n - V_m V_m^T) \mathcal{A}^T v_{m+1}$ and consider the reduced QR factorization $\mathcal{C}^T=V_0 \beta_0$. Then, by putting the relations in equation (\ref{eqn:a.relation1}) of Lemma-(\ref{a.relation}), the relation can be written as
\begin{equation} \label{eqn:trksm3}
\begin{aligned}
\mathcal{A}^TV_m &= V_m T_m + {v}_{m+1} \mu_{m+1} h_{m+1,m} e_m^T H_m^{-1} -  g h_{m+1,m} e_m^T H_m^{-1}\\
&= V_m T_m + ({v}_{m+1} \mu_{m+1} - f) h_{m+1,m} e_m^T H_m^{-1}.
\end{aligned}
\end{equation}

The residual of the CARE (\ref{eqn:D-GCARE}) can be written as
\begin{equation} \label{eqn:trksm4}
\begin{aligned}
\mathcal{R}(X) = \mathcal{A}^T X\mathcal{E} + \mathcal{E}^T X\mathcal{A} - \mathcal{E}^T X\mathcal{B} \mathcal{B}^T X\mathcal{E} + \mathcal{C}^T \mathcal{C}.
\end{aligned}
\end{equation}
	
Consider the approximate solution using the low-rank solution $\hat{X}$ as $X= V_m \hat{X} V_m^T$ and equation-(\ref{eqn:projected1}) in Lemma-(\ref{projected}), then applying (\ref{eqn:trksm3}) in (\ref{eqn:trksm4}), we get
\begin{equation} \label{eqn:trksm5}
\begin{aligned}
&\mathcal{R}(X_m) = V_m T_m \hat{X}_m V_m^T \mathcal{E} + ({v}_{m+1} \mu_{m+1} - f) h_{m+1,m} e_m^T H_m^{-1} \hat{X}_m V_m^T \mathcal{E} + \mathcal{E}^T V_m \hat{X} T_m^T V_m^T \\ &+ \mathcal{E}^T V_m \hat{X} H_m^{-T} e_m h_{m+1,m}^T ({v}_{m+1} \mu_{m+1} - f)^T - \mathcal{E}^T V_m \hat{X} V_m^T \mathcal{B} \mathcal{B}^T V_m \hat{X} V_m^T \mathcal{E} + \mathcal{C}^T \mathcal{C}\\  
&= ({v}_{m+1} \mu_{m+1} - f) h_{m+1,m} e_m^T H_m^{-1} \hat{X}_m V_m^T \mathcal{E} + \mathcal{E}^T V_m \hat{X} H_m^{-T} e_m h_{m+1,m}^T ({v}_{m+1} \mu_{m+1} - f)^T \\ &+ \mathcal{M}^T X \mathcal{E} + \mathcal{E}^T X \mathcal{M} - \mathcal{E}^T X \mathcal{B} \mathcal{B}^T X \mathcal{E} + \mathcal{C}^T \mathcal{C} \ ; \ \mathcal{M} = \mathcal{A}^T V_m V_m^T\\
&= \mathcal{E}^T V_m \hat{X} H_m^{-T} e_m h_{m+1,m}^T \mu_{m+1}^T v_{m+1}^T + ({v}_{m+1} \mu_{m+1} - f) h_{m+1,m} e_m^T H_m^{-1} \hat{X}_m V_m^T \mathcal{E} \\ &- \mathcal{E}^T V_m \hat{X} H_m^{-T} e_m h_{m+1,m}^T f^T + 0 \\
&=\begin{bmatrix} v_{m+1} \mu_{m+1} \ \ \mathcal{E}^T V_m \hat{X}_m H_m^{-T} e_m h_{m+1,m}^T \ \ -f \end{bmatrix}
\begin{bmatrix} 0 & I & 0 \\ I & 0 & I \\ 0 & I & 0 \end{bmatrix}
\begin{bmatrix} \mu_{m+1}^T v_{m+1}^T \\  h_{m+1,m} e_m^T H_m^{-1} \hat{X}_m^T V_m^T \mathcal{E} \\ -f^T \end{bmatrix}\\
&= S J S^T.
\end{aligned}
\end{equation}

Thus, the proof follows from the Frobenius norm of the equation (\ref{eqn:trksm5}).

\end{proof}

Then the relative-residual with respect to $\beta_0$ can be estimated as follows 
\begin{equation} \label{eqn:relres}
\begin{aligned}
{\|{\mathcal{R}_m}\|}_{(\text{relative})} = \frac{\|{\mathcal{R}_m}\|_F}{\|{\beta_0^T \beta_0}\|_F}.
\end{aligned}
\end{equation}

\subsubsection{Sparsity preservation}

The matrix $\mathcal{A}$ in (\ref{eqn:d-state-space}) is in dense form, which is contradictory to the aim of the work and the rate of convergence of the converted system is slow enough. So, to bypass these drawbacks at each iteration a shifted linear system needs to be solved for $v_i$ as

\begin{equation} \label{eqn:sparse_shifted}
\begin{aligned}
(\mathcal{A}^T-\mu_i \mathcal{E}^T)v_i &=V_{i-1},\\
or, {\begin{bmatrix}  J_1-{\mu_i}E_1  &  J_2 \\ J_3  &  J_4   \end{bmatrix}}^T \begin{bmatrix}  v_i  \\ \Gamma \end{bmatrix}
&=  \begin{bmatrix}    V_{i-1} \\ 0  \end{bmatrix}.
\end{aligned}
\end{equation} 

Here $\Gamma = - J_4^{-1} J_3 v_i$ is the truncated term. The linear system (\ref{eqn:sparse_shifted}) is higher dimensional but sparse and can be solved by the conventional sparse-direct solvers very efficiently \cite{uddin2019computational}. To improve the consistency of the RKSM approach, explicit form of the reduced-order matrices must not be used to construct reduced-order system. The sparsity-preserving reduced-order matrices can be attained by following way
\begin{equation} \label{eqn:sprom}
\begin{aligned}
\hat{\mathcal{E}} &= V^T E_1 V,\quad
\hat{\mathcal{A}} = V^T J_1 V - (V^T J_2){J_4}^{-1}(J_3 V),\\
\hat{\mathcal{B}} &= V^T B_1 - (V^T J_2) {J_4}^{-1} B_2,\quad
\hat{\mathcal{C}} = C_1 V - C_2 {J_4}^{-1} (J_3 V).
\end{aligned}
\end{equation} 

\subsubsection{Treatment for the unstable systems}

If the system is unstable, a Bernoulli stabilization is required through an initial-feedback matrix $K_0$ to estimate $\mathcal{A}_f = \mathcal{A}-\mathcal{B} K_0$ and the matrix $\mathcal{A}$ needs to be replaced \cite{benner2017model}. Then, the system (\ref{eqn:d-state-space}) and CARE (\ref{eqn:D-GCARE}) need to be re-defined as
\begin{equation} \label{eqn:re-state-space}
\begin{aligned}
\mathcal{E}\dot{x}(t)&=\mathcal{A}_f x(t)+\mathcal{B}u(t),\\
y(t)&=\mathcal{C}x(t)+\mathcal{D}u(t),
\end{aligned}
\end{equation}
\begin{equation} \label{eqn:re-GCARE}
\begin{aligned}
\mathcal{A}_f^T X\mathcal{E}+\mathcal{E}^T X\mathcal{A}_f-\mathcal{E}^T X\mathcal{B} \mathcal{B}^T X\mathcal{E}+\mathcal{C}^T \mathcal{C}=0.
\end{aligned}
\end{equation}  

Then, for every iterations $K=\hat{\mathcal{B}}^T \hat{X}V^T \mathcal{E}$ needs to be updated by the solution $\hat{X}$ of (\ref{eqn:LR-GCARE}) and the rational Krylov subspace for the projector $V$ needs to be redefined as
\small{\begin{align*}
\mathcal{K}_m=span \left(\mathcal{C}^T, {(\mathcal{A}_f^T-\mu_1 \mathcal{E}^T)}^{-1}\mathcal{C}^T,...,\prod_{i=1}^{m} {(\mathcal{A}_f^T-\mu_i \mathcal{E}^T)}^{-1} \mathcal{C}^T \right).
\end{align*} }

For the stabilized system using the initial-feedback matrix $K_0$, the expressions (\ref{eqn:sparse_shifted}) can be written as 
\begin{equation} \label{eqn:sparse_shifted_st}
\begin{aligned}
(\mathcal{A}_f^T-\mu_i \mathcal{E}^T)v_i &=V_{i-1},\\
or, {\begin{bmatrix}  (J_1-B_1 K_0)-{\mu_i}E_1  &  J_2 \\ J_3-B_2 K_0  &  J_4   \end{bmatrix}}^T \begin{bmatrix}  v_i  \\ *\end{bmatrix} &=  \begin{bmatrix} V_{i-1} \\ 0  \end{bmatrix}.
\end{aligned}
\end{equation} 

To evaluate the shifted linear systems, explicit inversion of $\mathcal{A} - \mathcal{B} K$ should be avoided in practice, instead the \textit{Sherman-Morrison-Woodbury} formula needs to be used as follows
\begin{align*}
(\mathcal{A} - \mathcal{B} K)^{-1} = \mathcal{A}^{-1} + \mathcal{A}^{-1} \mathcal{B}(I- K\mathcal{A}^{-1} \mathcal{B})^{-1}K \mathcal{A}^{-1}.
\end{align*}

\subsubsection{Estimation of the optimal feedback matrix}

The low-rank solution $\hat{X}$ is symmetric and positive definite and can be factorized  as $\hat{X}=YY^T$. The original solution can be approximated as $X=V \hat{X} V^T=VY(VY)^T$. Finally, the desired low-rank factored solution $Z=VY$ of the CARE (\ref{eqn:D-GCARE}) will be stored and the optimal feedback matrix $K^o = \mathcal{B}^T X \mathcal{E} = \mathcal{B}^T (ZZ^T) \mathcal{E}$ can be estimated. This process is iterative and will continue until the desired convergence is achieved. The whole process is summarized in the Algorithm-\ref{pre:RKSM-FB.}.
\begin{algorithm}[t]
\SetAlgoLined
\SetKwInOut{Input}{Input}
\SetKwInOut{Output}{Output}
\caption{Updated RKSM (sparsity-preserving).}
\label{pre:RKSM-FB.}
\Input{ $E_1, J_1, J_2, J_3, J_4, B_1, B_2, C_1, C_2, K_0$ (initial feedback matrix) $i_{max}$ (number of iterations) and $\mu_i$ (initial shifts).}
\Output{Low-rank factored solution $Z$ such that $X\approx ZZ^T$ and optimal feedback matrix $K^o$.}  
Compute $Q_0 R_0 = (C_1 - C_2{J_4}^{-1}J_3)^T$ (QR factorization).\\ 
Choose $V_0 = Q_0$.\\
Choose $K=K_0$.\\
\While{{not converged \ or \ $m \leq i_{\text{max}}$}}{%
Solve the linear system (\ref{eqn:sparse_shifted_st}) for $v_{m+1}$.\\
Compute adaptive shifts for the next iterations (if store is empty).\\
Using Arnoldi algorithm orthogonalize $v_{m+1}$ against $V_m$ to obtain $\hat{v}_{m+1}$, such that $V_{m+1}=\begin{bmatrix} V_m, \hat{v}_{m+1} \end{bmatrix}$.\\ 
Assuming $\hat{\mathcal{E}}$, $\hat{\mathcal{A}}$, $\hat{\mathcal{B}}$ and $\hat{\mathcal{C}}$ are defined in (\ref{eqn:sprom}), for $\hat{X}$ solve the reduced-order Riccati equation
$\hat{\mathcal{A}}^T \hat{X}\hat{\mathcal{E}}+ \hat{\mathcal{E}}^T \hat{X} \hat{\mathcal{A}}-\hat{\mathcal{E}}^T \hat{X} \hat{\mathcal{B}} \hat{\mathcal{B}}^T \hat{X} \hat{\mathcal{E}} + \hat{\mathcal{C}}^T \hat{\mathcal{C}}=0$.\\
Update $K=(\hat{\mathcal{B}}^T \hat{X})V_{m+1} E_1$.\\
Compute ${\|{\mathcal{R}_m}\|}_{(\text{relative})}$ for convergence.\\
}  
Compute eigenvalue decomposition 
$\hat{X} = T\Lambda T^T=\begin{bmatrix} T_1 & T_2 \end{bmatrix}
\begin{bmatrix} \Lambda_1 & 0 \\ 0 & \Lambda_2\end{bmatrix}
\begin{bmatrix} T_1^T \\ T_2^T \end{bmatrix}$.\\
For negligible eigenvalues truncate $\Lambda_2$ and construct
$Z=V_{m+1} T_1 \Lambda_1^{\frac{1}{2}}$.\\ 
Compute the optimal feedback matrix $K^o = (B_1 - J_2{J_4}^{-1}B_2)^T(ZZ^T)E_1$.
\end{algorithm}  

\subsection{Updated LRCF-ADI Based modified Kleinman-Newton method}

In each iteration of Algorithm-\ref{pre:KN.}, the generalized CALE needs to be solved for once and there are several techniques available to do it. In his Ph.D. thesis, Kuerschner discussed low-rank ADI approaches (Algorithm-3.2 chapter-3 and Algorithm-6.2 in chapter-6) for solving generalized CALE derived from generalized CARE in the iterative loops of Kleinman-Newton algorithm \cite{kurschner2016efficient}. Now, we need to implement above mechanisms for the index-1 descriptor system in the sparse form. For the adjustment, some modifications are required as given below.

\subsection{Convergence criteria and recurrence relations}
The computation of residuals of the ADI iterations can be achieved using the simplified technique implementing the adjustable shift parameters. This approach will be efficient for time-dealing and memory allocation. Following lemmas represent some important properties of the ADI iterates. 

\begin{lemma} [\textbf{Convergence of ADI iterates \cite{feitzinger2009inexact}}] \label{Lres1}
Let $X_i$ be the solution of the CALE (\ref{eqn:GCALE}) and consider $X_i^{(k)}$ be an iterate of the ADI method. Then for all $\mu_i \in \mathbb{C}^-$ the relation holds
\begin{equation} \label{eqn:lyapres1}
\begin{aligned}
X_i^{(k+1)} - X_i = \left( \prod_{j=1}^{k}{\mathcal{A}_{k,\mu_j}} \right) (X_l^{(0)} - X_l) \left( \prod_{j=1}^{k}{\mathcal{A}_{k,\mu_j}} \right)^T,
\end{aligned}
\end{equation} 
where $\mathcal{A}_{k,\mu_j} = (\mathcal{A}^T-\bar{\mu_j} \mathcal{E}^T)(\mathcal{A}^T+\mu_j \mathcal{E}^T)^{-1}$.
\end{lemma}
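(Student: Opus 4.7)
The plan is induction on $k$, with the crux being to establish a one-step recurrence of the form
\begin{align*}
X_i^{(k)} - X_i = \mathcal{A}_{k,\mu_k}\,(X_i^{(k-1)} - X_i)\,\mathcal{A}_{k,\mu_k}^T,
\end{align*}
from which the telescoped product in (\ref{eqn:lyapres1}) follows by iterating this identity $k$ times with shifts $\mu_1, \mu_2, \dots, \mu_k$ in succession; the base case $k=0$ is trivial.

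First I would spell out the ADI two-half-step iteration attached to the CALE (\ref{eqn:GCALE}) using the complementary splittings
\begin{align*}
\mathcal{A}^T X \mathcal{E} + \mathcal{E}^T X \mathcal{A}
&= (\mathcal{A}^T + \mu_k \mathcal{E}^T) X \mathcal{E} + \mathcal{E}^T X (\mathcal{A} - \mu_k \mathcal{E}) \\
&= (\mathcal{A}^T - \bar{\mu}_k \mathcal{E}^T) X \mathcal{E} + \mathcal{E}^T X (\mathcal{A} + \bar{\mu}_k \mathcal{E}).
\end{align*}
The first splitting underlies the half-step $(\mathcal{A}^T + \mu_k \mathcal{E}^T)X_i^{(k-1/2)}\mathcal{E} = -C^T C - \mathcal{E}^T X_i^{(k-1)}(\mathcal{A} - \mu_k \mathcal{E})$, and the second underlies $\mathcal{E}^T X_i^{(k)}(\mathcal{A} + \bar{\mu}_k \mathcal{E}) = -C^T C - (\mathcal{A}^T - \bar{\mu}_k \mathcal{E}^T) X_i^{(k-1/2)}\mathcal{E}$.

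Second, I would subtract the exact equation $\mathcal{A}^T X_i \mathcal{E} + \mathcal{E}^T X_i \mathcal{A} = -C^T C$ from each half-step so that the inhomogeneity $-C^T C$ cancels. The two resulting homogeneous relations in the errors $X_i^{(k-1/2)} - X_i$ and $X_i^{(k)} - X_i$ can then be combined to eliminate the intermediate error. Because the index-1 hypothesis guarantees $\mathcal{E} = E_1$ is invertible, the stray factors $\mathcal{E}^T$, $\mathcal{E}$ and $(\mathcal{A} \pm \bar{\mu}_k \mathcal{E})$ can be moved across and collected, yielding the required symmetric sandwich with $\mathcal{A}_{k,\mu_k}$ on the left of $X_i^{(k-1)} - X_i$ and its transpose on the right. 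Iterating this identity and assembling the product then gives (\ref{eqn:lyapres1}).

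The main obstacle is precisely this algebraic elimination: the naive substitution delivers $(\mathcal{A}^T - \bar{\mu}_k \mathcal{E}^T)(\mathcal{A}^T + \mu_k \mathcal{E}^T)^{-1}$ on the left of the old error but $(\mathcal{A} - \mu_k \mathcal{E})(\mathcal{A} + \bar{\mu}_k \mathcal{E})^{-1}$ on the right, together with interleaved $\mathcal{E}$-factors, and showing that these two pieces are genuine transposes of one another (once the surrounding $\mathcal{E}^T, \mathcal{E}$ are absorbed) requires careful bookkeeping with the complex conjugation on $\mu_k$. Once this step is executed cleanly the inductive telescoping is routine, and no machinery beyond the two splittings above enters the argument.
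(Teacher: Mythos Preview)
The paper does not prove this lemma at all: it is quoted verbatim from \cite{feitzinger2009inexact} and used as a black box (together with Lemma~\ref{Lres2}) in the proof of Theorem~\ref{Tadi2}. So there is no ``paper's own proof'' to compare against.

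That said, your outline is the standard derivation and is essentially correct. The two-half-step splitting you write down is exactly the classical ADI sweep, and subtracting the exact CALE to cancel $-C^TC$ is the right move. One point worth tightening: you invoke invertibility of $\mathcal{E}=E_1$ to strip the interleaved $\mathcal{E}$-factors, but in fact the cleaner route is to observe that $(\mathcal{A}-\mu_k\mathcal{E})(\mathcal{A}+\bar{\mu}_k\mathcal{E})^{-1}$ is literally the transpose of $\mathcal{A}_{k,\mu_k}=(\mathcal{A}^T-\bar{\mu}_k\mathcal{E}^T)(\mathcal{A}^T+\mu_k\mathcal{E}^T)^{-1}$ once you use $(M^{-1})^T=(M^T)^{-1}$; the $\mathcal{E}^T(\cdot)\mathcal{E}$ sandwich then cancels between the two half-steps without ever inverting $\mathcal{E}$ explicitly. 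This also makes the argument go through in the singular-$\mathcal{E}$ setting treated in the reference, rather than only in the index-1 reduction. Apart from that bookkeeping, the induction you describe is routine and your identification of the ``main obstacle'' is accurate.
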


\begin{lemma} [\textbf{Residual of the ADI iterates \cite{feitzinger2009inexact}}] \label{Lres2}
Let $X_i^{(k)}$ be an iterate of the CALE (\ref{eqn:GCALE}) by the ADI method. Then considering $\mathcal{A}_{k,\mu_j} = (\mathcal{A}^T-\bar{\mu_j} \mathcal{E}^T)(\mathcal{A}^T+\mu_j \mathcal{E}^T)^{-1}$ for all $\mu_i \in \mathbb{C}^-$ the residuals at $X_i^{(k)}$ have the form
\begin{equation} \label{eqn:lyapres2}
\begin{aligned}
\mathcal{R}_i^{(k+1)} &= \mathcal{E}^T X_i^{(k)} \mathcal{A} + \mathcal{A}^T X_i^{(k)} \mathcal{E} + \mathcal{C}^T \mathcal{C} \\ 
&= \left( \prod_{j=1}^{k}{\mathcal{A}_{k,\mu_j}} \right) (\mathcal{E}^T X_i^{(0)} \mathcal{A} + \mathcal{A}^T X_i^{(0)} \mathcal{E} + \mathcal{C}^T \mathcal{C}) \left( \prod_{j=1}^{k}{\mathcal{A}_{k,\mu_j}} \right)^T.
\end{aligned}
\end{equation} 
\end{lemma}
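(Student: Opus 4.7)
The plan is to prove the formula by induction on $k$, based on the one-step residual recurrence $\mathcal{R}_i^{(k+1)} = \mathcal{A}_{k,\mu_{k+1}}\,\mathcal{R}_i^{(k)}\,\mathcal{A}_{k,\mu_{k+1}}^T$, with $\mathcal{R}_i^{(0)} = \mathcal{E}^T X_i^{(0)}\mathcal{A} + \mathcal{A}^T X_i^{(0)}\mathcal{E} + \mathcal{C}^T\mathcal{C}$. Once this single-step relation is in hand, iterating it and collapsing the Cayley factors into the product $\prod_{j=1}^{k}\mathcal{A}_{k,\mu_j}$ immediately yields the displayed identity.

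To get to this one-step recurrence I would first reduce the residual to a pure error expression. Since $X_i$ is the exact solution of the CALE~(\ref{eqn:GCALE}), the identity $\mathcal{A}^T X_i\mathcal{E} + \mathcal{E}^T X_i\mathcal{A} + \mathcal{C}^T\mathcal{C} = 0$ holds, so writing $X_i^{(k)} = X_i + (X_i^{(k)} - X_i)$ in the definition of $\mathcal{R}_i^{(k+1)}$ and cancelling the $X_i$-terms leaves $\mathcal{R}_i^{(k+1)} = \mathcal{E}^T(X_i^{(k)} - X_i)\mathcal{A} + \mathcal{A}^T(X_i^{(k)} - X_i)\mathcal{E}$. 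Lemma~\ref{Lres1} then provides the factored form $X_i^{(k)} - X_i = P_k(X_i^{(0)} - X_i)P_k^T$ with $P_k = \prod_{j=1}^{k}\mathcal{A}_{k,\mu_j}$, reducing the task to moving the outer factors $\mathcal{A}^T$ and $\mathcal{E}^T$ past $P_k$ and $P_k^T$.

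The main obstacle is precisely this commutation step, since $\mathcal{A}^T$ and $\mathcal{E}^T$ generally do not commute, and so neither commutes individually with $\mathcal{A}_{k,\mu_j} = (\mathcal{A}^T - \bar{\mu_j}\mathcal{E}^T)(\mathcal{A}^T + \mu_j\mathcal{E}^T)^{-1}$. The saving algebraic identity is $\mathcal{A}_{k,\mu_j}(\mathcal{A}^T + \mu_j\mathcal{E}^T) = (\mathcal{A}^T + \mu_j\mathcal{E}^T)\mathcal{A}_{k,\mu_j} = \mathcal{A}^T - \bar{\mu_j}\mathcal{E}^T$, which ties $\mathcal{A}^T\mathcal{A}_{k,\mu_j}$ and $\mathcal{E}^T\mathcal{A}_{k,\mu_j}$ to their reversed-order counterparts up to correction terms that cancel only when assembled into the symmetric Lyapunov operator $X \mapsto \mathcal{E}^T X\mathcal{A} + \mathcal{A}^T X\mathcal{E}$. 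Unwinding that cancellation, most transparently at the single-factor level before iterating, converts $\mathcal{E}^T P_k (X_i^{(0)} - X_i) P_k^T \mathcal{A} + \mathcal{A}^T P_k (X_i^{(0)} - X_i) P_k^T \mathcal{E}$ into $P_k\bigl[\mathcal{E}^T(X_i^{(0)} - X_i)\mathcal{A} + \mathcal{A}^T(X_i^{(0)} - X_i)\mathcal{E}\bigr]P_k^T$. Re-adding the zero quantity $\mathcal{C}^T\mathcal{C} + \mathcal{E}^T X_i\mathcal{A} + \mathcal{A}^T X_i\mathcal{E} = 0$ inside the bracket, conjugated by $P_k$ on either side, then yields exactly $P_k\,\mathcal{R}_i^{(0)}\,P_k^T$, which is the claim.
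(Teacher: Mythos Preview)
The paper does not prove this lemma; it is quoted from \cite{feitzinger2009inexact} without argument, so there is no in-paper proof to compare against and your plan has to stand on its own.

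There is a genuine gap at the commutation step. Your ``saving algebraic identity'' asserts in particular that $(\mathcal{A}^T + \mu_j\mathcal{E}^T)\mathcal{A}_{k,\mu_j} = \mathcal{A}^T - \bar{\mu_j}\mathcal{E}^T$, i.e.\ that $\mathcal{A}^T + \mu_j\mathcal{E}^T$ and $\mathcal{A}^T - \bar{\mu_j}\mathcal{E}^T$ commute. Expanding the commutator gives $(\mu_j + \bar{\mu_j})(\mathcal{E}^T\mathcal{A}^T-\mathcal{A}^T\mathcal{E}^T)$, which vanishes only when $\mathcal{A}^T$ and $\mathcal{E}^T$ commute or when $\mathrm{Re}(\mu_j)=0$; neither holds here. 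More to the point, the intertwining you actually need,
\[
\mathcal{E}^T P Y P^T \mathcal{A} + \mathcal{A}^T P Y P^T \mathcal{E} \;=\; P\bigl(\mathcal{E}^T Y\mathcal{A} + \mathcal{A}^T Y\mathcal{E}\bigr)P^T,\qquad P=\mathcal{A}_{k,\mu},
\]
fails for generic noncommuting $\mathcal{A}^T,\mathcal{E}^T$: already $\mathcal{A}^T=\mathrm{diag}(1,2)$, $\mathcal{E}^T=\bigl(\begin{smallmatrix}0&1\\1&0\end{smallmatrix}\bigr)$, $\mu=-1$, $Y=I$ gives different off-diagonal entries on the two sides. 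Hence the route ``error formula of Lemma~\ref{Lres1} $\Rightarrow$ residual formula'' does not go through as written in the generalized ($\mathcal{E}\neq I$) setting, and the cancellation you allude to does not in fact occur.

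A repair that does work is to derive the one-step relation $\mathcal{R}_i^{(k+1)}=\mathcal{A}_{k,\mu_{k+1}}\,\mathcal{R}_i^{(k)}\,\mathcal{A}_{k,\mu_{k+1}}^{\,T}$ directly from the two ADI half-sweeps for the generalized CALE, or equivalently (when $\mathcal{E}$ is invertible) to pass to the standard equation $\tilde{\mathcal{A}}^T Y + Y\tilde{\mathcal{A}}+\mathcal{C}^T\mathcal{C}=0$ with $\tilde{\mathcal{A}}=\mathcal{E}^{-1}\mathcal{A}$ and $Y=\mathcal{E}^T X\mathcal{E}$. There the Cayley factor is a rational function of $\tilde{\mathcal{A}}^T$ alone and genuinely commutes with it, while both the residual and the Cayley factor coincide with their generalized counterparts; your induction on $k$ then finishes the argument cleanly.
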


\begin{theorem} [\textbf{Residual factor of the ADI iterations}] \label{Tadi2}
The residual for CALE (\ref{eqn:GCALE}) at $i$-th iteration of ADI method is of rank at most $m$ and is given by 
\begin{equation} \label{eqn:residue1}
\begin{aligned}
\mathcal{R}(X_i) = \mathcal{A}^T X_i \mathcal{E}+\mathcal{E}^T X_i \mathcal{A}+\mathcal{C}^T \mathcal{C} = \mathcal{W}_i \mathcal{W}_i^T, 
\end{aligned}
\end{equation}
with $\mathcal{W}_i=\left( \prod_{i=1}^{m} {(\mathcal{A}^T-\bar{\mu_i} \mathcal{E}^T)(\mathcal{A}^T+\mu_i \mathcal{E}^T)^{-1}} \right)\mathcal{C}^T$, for all $\mu_i \in \mathbb{C}^-$.
\end{theorem}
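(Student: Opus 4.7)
The plan is to derive the factored form of the residual directly from Lemma~\ref{Lres2} by specializing to the canonical zero initial iterate, and then to read off the rank bound from the column dimension of $\mathcal{C}^T$.

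First, I would invoke Lemma~\ref{Lres2}, which already supplies the closed-form expression
\begin{equation*}
\mathcal{R}_i^{(k+1)} = \left( \prod_{j=1}^{k} \mathcal{A}_{k,\mu_j} \right) \bigl(\mathcal{E}^T X_i^{(0)} \mathcal{A} + \mathcal{A}^T X_i^{(0)} \mathcal{E} + \mathcal{C}^T \mathcal{C}\bigr) \left( \prod_{j=1}^{k} \mathcal{A}_{k,\mu_j} \right)^T,
\end{equation*}
with $\mathcal{A}_{k,\mu_j} = (\mathcal{A}^T-\bar\mu_j \mathcal{E}^T)(\mathcal{A}^T+\mu_j \mathcal{E}^T)^{-1}$. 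Since the standard LRCF-ADI recurrence is initialized at $X_i^{(0)} = 0$, the middle factor collapses to the constant right-hand side $\mathcal{C}^T \mathcal{C}$. Renaming the iteration index $k \mapsto m$ and splitting $\mathcal{C}^T\mathcal{C}$ symmetrically across the two bracketed products yields
\begin{equation*}
\mathcal{R}(X_i) = \Bigl[\,\textstyle\prod_{j=1}^{m} \mathcal{A}_{m,\mu_j}\, \mathcal{C}^T\Bigr]\,\Bigl[\,\textstyle\prod_{j=1}^{m} \mathcal{A}_{m,\mu_j}\, \mathcal{C}^T\Bigr]^T = \mathcal{W}_i \mathcal{W}_i^T,
\end{equation*}
which is precisely the expression for $\mathcal{W}_i$ claimed in the theorem statement.

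For the rank bound, I would observe that $\mathcal{C} \in \mathbb{R}^{m \times n}$, so $\mathcal{C}^T$ has exactly $m$ columns. The left factor $\prod_{j=1}^{m} \mathcal{A}_{m,\mu_j}$ is an $n \times n$ matrix, hence $\mathcal{W}_i \in \mathbb{R}^{n \times m}$. Therefore $\operatorname{rank}(\mathcal{W}_i \mathcal{W}_i^T) \leq \operatorname{rank}(\mathcal{W}_i) \leq m$, establishing the rank-at-most-$m$ claim.

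The main point requiring care is not really an obstacle but a bookkeeping check: one must keep the ordering of the rational factors consistent when taking the transpose, since $(\mathcal{A}^T-\bar\mu_j\mathcal{E}^T)$ and $(\mathcal{A}^T+\mu_j\mathcal{E}^T)^{-1}$ for distinct indices need not commute as matrices. This is handled cleanly by simply writing $\mathcal{W}_i^T$ as the reverse-ordered product of the transposed factors and never rearranging them, so the symmetric factorization $\mathcal{R}(X_i) = \mathcal{W}_i \mathcal{W}_i^T$ holds verbatim. One should also tacitly assume that every pencil $\mathcal{A}^T + \mu_j \mathcal{E}^T$ is nonsingular, which is guaranteed by the choice of shifts $\mu_j \in \mathbb{C}^-$ together with the spectral hypothesis on $(\mathcal{A},\mathcal{E})$ ensuring solvability of the CALE~\eqref{eqn:GCALE}.
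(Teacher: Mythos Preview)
Your proposal is correct and follows essentially the same route as the paper: both arguments specialize Lemma~\ref{Lres2} to the zero initial iterate $X_i^{(0)}=0$ so that the inner factor collapses to $\mathcal{C}^T\mathcal{C}$, and then read off the symmetric factorization $\mathcal{W}_i\mathcal{W}_i^T$. Your write-up is in fact tighter than the paper's, since the paper takes an unnecessary detour through the Stein equation and Lemma~\ref{Lres1} to justify $X_i^{(0)}=0$, and you additionally supply the explicit rank-$m$ argument that the paper's proof omits.
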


\begin{proof}
Consider $X$ is the solution of the CALE (\ref{eqn:GCALE}) and $X_i$ is its $i$-th iterate by ADI method. Then the residual of the $i$-th iteration in terms of $X_i$ can be written as
\begin{equation} \label{eqn:residue2}
\begin{aligned}
\mathcal{R}(X_i) = \mathcal{A}^T X_i \mathcal{E} + \mathcal{E}^T X_i \mathcal{A} + \mathcal{C}^T \mathcal{C}. 
\end{aligned}
\end{equation}
	
For all $\mu_i \in \mathbb{C}^-$, the Stein's equation is equivalent to the CALE (\ref{eqn:GCALE}) and it can be written as
\begin{equation} \label{eqn:Stein}
\begin{aligned}
X &= {(\mathcal{A}^T-\bar{\mu_i} \mathcal{E}^T)(\mathcal{A}^T+\mu_i \mathcal{E}^T)^{-1}} X_i {(\mathcal{A}^T-\bar{\mu_i} \mathcal{E}^T)^T (\mathcal{A}^T+\mu_i \mathcal{E}^T)^{-T}} \\ &- 2\mu_i (\mathcal{A}^T+\mu_i \mathcal{E}^T)^{-T} \mathcal{C}^T \mathcal{C} (\mathcal{A}^T+\mu_i \mathcal{E}^T)^{-1}
\end{aligned}
\end{equation}
	
Using the Stein's equation (\ref{eqn:Stein}) in Lemma-\ref{Lres1}, we can find some initial iterates $X_i^{(0)} = 0$ for which the residuals $\mathcal{R}_i^{(k+1)} \ge 0$. Then according to the Lemma-\ref{Lres2} the residual (\ref{eqn:residue2}) can be defined as
\small{
\begin{equation} \label{eqn:residue3}
\begin{aligned}
\mathcal{R}(X_i)& =\left( \prod_{i=1}^{m} {(\mathcal{A}^T-\bar{\mu_i} \mathcal{E}^T)(\mathcal{A}^T+\mu_i \mathcal{E}^T)^{-1}} \right)\mathcal{C}^T
\mathcal{C}\left( \prod_{i=1}^{m} {(\mathcal{A}^T-\bar{\mu_i} \mathcal{E}^T)(\mathcal{A}^T+\mu_i \mathcal{E}^T)^{-1}} \right)^T,\\
&=\mathcal{W}_i \mathcal{W}_i^T,
\end{aligned}
\end{equation}
}
with $\mathcal{W}_i=\left( \prod_{i=1}^{m} {(\mathcal{A}^T-\bar{\mu_i} \mathcal{E}^T)(\mathcal{A}^T+\mu_i \mathcal{E}^T)^{-1}} \right)\mathcal{C}^T$. 
\end{proof}

Thus, for $\mathcal{W}_i \mathcal{W}_i^T \leq \tau$ the ADI iterations in the LRCF-ADI algorithm needs to be stopped, where $\tau$ is a given margin of tolerance. Then with the residual factor relation for $V_i$ can be derived as
\begin{equation} \label{eqn:relation1}
\begin{aligned}
V_i =(\mathcal{A}^T+\mu_i \mathcal{E}^T)^{-1} \mathcal{W}_{i-1}.
\end{aligned}
\end{equation}

Then, using (\ref{eqn:relation1}) the residual factor $W_i$ can be derived in a recursive form as
\begin{equation} \label{eqn:relation2}
\begin{aligned}
\mathcal{W}_i=\mathcal{W}_{i-1}-2\Rel(\mu_i) \mathcal{E}^T V_i.
\end{aligned}
\end{equation}

In case of real setting, $\mu_{i+1}:=\bar{\mu_i}$ needs to be considered to find the following form 
\begin{equation} \label{eqn:relation3}
\begin{aligned}
\mathcal{W}_{i+1}=\mathcal{W}_{i-1}+2\gamma_i^2 \mathcal{E}^T \left[ \Rel(V_i)+\delta_i \Imag(V_i) \right].
\end{aligned}
\end{equation}

The summary of above techniques is given in the Algorithm-(\ref{pre:LRCF-ADI}).
\begin{algorithm}[t]
\SetAlgoLined
\SetKwInOut{Input}{Input}
\SetKwInOut{Output}{Output}
\caption{Updated LRCF-ADI (Real Version).}
\label{pre:LRCF-ADI}
\Input{ $\mathcal{E}, \mathcal{A}, \mathcal{C}$, $\tau$ (tolerance), $ i_{max}$ (number of iterations) and shift parameters $\{\mu_j\}_{j=1}^{i_{max}}$.}
\Output{Low-rank Cholesky-factor $Z$ such that $X\approx ZZ^T$.}  
Consider $\mathcal{W}_0=\mathcal{C}^T, \  Z_0=[ \ ]$ and $i=1$.\\
\While{ $\|\mathcal{W}_{i-1} \mathcal{W}_{i-1}^T\| \geq \tau$ or $i \leq  i_{max}$}{%
Solve $V_i=(\mathcal{A}^T+\mu_i \mathcal{E}^T)^{-1} \mathcal{W}_{i-1}$.\\
\eIf{$\Imag(\mu_i)=0$}
{ Update $Z_i=\begin{bmatrix} Z_{i-1} \quad \sqrt{-2\mu_i} V_i \end{bmatrix}$,\\	
Compute $\mathcal{W}_i=\mathcal{W}_{i-1}-2\mu_i \mathcal{E}^T V_i$.}
{Assume $\gamma_i=\sqrt{-2\Rel(\mu_i)}$, \quad $\delta_i=\frac{\Rel(\mu_i)}{\Imag(\mu_i)}$,\\
Update $Z_{i+1}=\begin{bmatrix}Z_{i-1} \quad \gamma_i (\Rel(V_i)+\delta_i \Imag(V_i)) \quad \gamma_i \sqrt{\delta_i^2 + 1} \Imag(V_i)\end{bmatrix}$,\\
Compute $\mathcal{W}_{i+1}=\mathcal{W}_{i-1}+2\gamma_i^2 \mathcal{E}^T \left[ \Rel(V_i)+\delta_i \Imag(V_i) \right]$.\\
$i=i+1$
}
$i=i+1$
}
\end{algorithm} 

\subsubsection{Adjustment for the unstable systems}
For the unstable index-1 descriptor system, initial feedback matrix $K_0$ needs to be introduced and instead of $(\tilde{\mathcal{A}}^{(i)}, \mathcal{E})$, corresponding shift parameters are needed to be computed from eigen-pair $(\tilde{\mathcal{A}}^{(i)} - \mathcal{B}K_0, \mathcal{E})$. The sparse form of the eigen-pair can be structured as

\begin{equation} \label{eqn:n-eig}
\begin{aligned}
& (\tilde{\mathcal{A}}^{(i)} - \mathcal{B}K_0, \mathcal{E}) =((\mathcal{A}-\mathcal{B}K_0)-\mathcal{B}\mathcal{B}^T (Z^{(i)} (Z^{(i)})^T) \mathcal{E}, \mathcal{E}),\\
&=\left(\begin{bmatrix}  (J_1 - B_1 K_0) - B_1 B_1^T (Z^{(i)} (Z^{(i)})^T) E_1  &  \quad J_2 \\  ( J_3 -  B_2 K_0) - B_2 B_1^T (Z^{(i)} (Z^{(i)})^T) E_1  & \quad J_4 \end{bmatrix}, \begin{bmatrix}  E_1  &  0 \\   0  & 0\end{bmatrix}\right).
\end{aligned}
\end{equation}

To find $V_j^{(i)}$, in each ADI (inner) iteration a shifted linear system needs to be solved as  
\begin{equation} \label{eqn:n-linear-st}
\begin{aligned}
((\tilde{\mathcal{A}}^{(i)} -  \mathcal{B}K_0) +\mu_j^{(i)} \mathcal{E})^TV_j^{(i)} &=\mathcal{W}_{j-1}^{(i)},\\
or,((\mathcal{A} - \mathcal{B}K_0) - \mathcal{B}\mathcal{B}^T (Z_{j-1}^{(i)} (Z_{j-1}^{(i)})^T) \mathcal{E} +\mu_j^{(i)} \mathcal{E})^TV_j^{(i)} &=\mathcal{W}_{j-1}^{(i)}.
\end{aligned}
\end{equation}

Thus, $V_j^{(i)}$ can be obtained from the sparse form of the shifted linear system structured as

\small{\begin{equation} \label{eqn:n-broken-st}
\begin{aligned}
\begin{bmatrix}  (J_1  - B_1 K_0 ) - B_1 B_1^T (Z_{j-1}^{(i)} (Z_{j-1}^{(i)})^T) E_1 + \mu_j E_1  &  \quad J_2 \\   (J_3  -  B_2 K_0) - B_2 B_1^T (Z_{j-1}^{(i)} (Z_{j-1}^{(i)})^T) E_1  & \quad J_4 \end{bmatrix}^T \begin{bmatrix}  V_j^{(i)} \\  \Gamma \end{bmatrix}\\ =\begin{bmatrix}  C_1^T  &  E_1^T (Z_{j-1}^{(i)} & (Z_{j-1}^{(i)})^T) B_1 \\  C_2^T   & 0\end{bmatrix}.
\end{aligned}
\end{equation}}

\subsubsection{Estimation of the optimal feedback matrix}

The feedback matrix $K^{(i)}=\mathcal{B}^T X^{(i)} \mathcal{E}=\mathcal{B}^T (Z^{(i)}(Z^{(i)})^T) \mathcal{E}$ needs to be computed in each ADI (inner) iteration and the optimal feedback matrix $K^o=K^{(i_{max})}$ needs to be stored after the final Newton (outer) iteration. The summary of the modified method is given in the Algorithm-(\ref{pre:KN-LRCF-ADI.}).
\begin{algorithm}[]
\SetAlgoLined
\SetKwInOut{Input}{Input}
\SetKwInOut{Output}{Output}
\caption{Modified KN-LRCF-ADI (sparsity-preserving).}
\label{pre:KN-LRCF-ADI.}
\Input{ $E_1, J_1, J_2, J_3, J_4, B_1, B_2, C_1, C_2, K_0$ (initial feedback matrix), and $\tau$ (tolerance).}
\Output{Low-rank Cholesky-factor $Z$ such that $X \approx ZZ^T$ and optimal feedback matrix $K^o$.}
\For {$i \gets 1$ to $i_{max}$}{
Choose $Z_0^{(i)}=[ \  ], $ $K_0^{(i)}=K_0$ and $j=0$.\\
Assume $\mathcal{W}_0^{(i)}= \begin{bmatrix}  C_1^T &  (K^{(i-1)})^T \\ C_2^T & 0 \end{bmatrix}$.\\
Compute adaptive shifts $\left \{  \mu_1^{(i)},......,\mu_J^{(i)} \right \}$ from the eigenpair defined in (\ref{eqn:n-eig}).\\ 
\While{ $\left(\|\mathcal{W}_j^{(i)}\|^2> \tau \|\mathcal{W}_0^{(i)}\|^2 \right)$}{%
$j=j+1$\\
Solve the linear system (\ref{eqn:n-broken-st}) for $V_j^{(i)}$.\\
\eIf{$\text{Im}(\mu_j^{(i)})=0$}
{Update $Z_j^{(i)}=\begin{bmatrix} Z_{j-1}^{(i)} \quad & \sqrt{-2 \mu_i} V_j^{(i)} \end{bmatrix}$,\\
Compute $\mathcal{W}_j^{(i)}=\mathcal{W}_{j-1}^{(i)}-2 \mu_j^{(i)} E_1^T V_j^{(i)}$,\\
Compute $K_j^{(i)}=K_{j-1}^{(i)}-2 \mu_j^{(i)} (B_1 - J_2{J_4}^{-1}B_2)^T V_j^{(i)} (V_j^{(i)})^T E_1$.}
{Assume $\gamma_j^{(i)}=\sqrt{-2\text{Re}(\mu_j^{(i)})}$, \quad $\beta_j^{(i)}=\frac{\text{Re}(\mu_j^{(i)})}{\text{Im}(\mu_j^{(i)})}$,
\quad $\delta_j^{(i)} = \text{Re}(V_j^{(i)}) + \beta_j^{(i)}\text{Im}(V_j^{(i)}),$\\
Compute $ Z_d=\begin{bmatrix} \gamma_j^{(i)} \delta_j^{(i)} \quad & \gamma_j^{(i)} \sqrt{(\beta_j^{(i)})^2+1} \ \text{Im}(\mu_j^{(i)}) \end{bmatrix}$,\\
Update $Z_{j+1}^{(i)}=\begin{bmatrix} Z_{j-1}^{(i)}  \quad & Z_d \end{bmatrix}$,\\
Compute $\mathcal{W}_{i+1}^{(i)} = \mathcal{W}_{i-1}^{(i)} -4\text{Re}(\mu_j^{(i)}) E_1^T \delta_j^{(i)}$,\\
Compute $K_{j+1}^{(i)}=K_{j-1}^{(i)}+(B_1 - J_2{J_4}^{-1}B_2)^T Z_d {Z_d}^T E_1$,\\
$j=j+1$.
}
}
Update $Z^{(i)}=Z_j^{(i)}$ and $K^{(i)}=K_j^{(i)}$.	
}
\end{algorithm} 

\subsection{Computation of the optimally stabilized system and the optimal control}

The optimal feedback matrix $K^o=\mathcal{B}^T X\mathcal{E}$ can be achieved by the feasible solution $X$ of the Riccati equation (\ref{eqn:D-GCARE}). Then, applying $\mathcal{A}_s = \mathcal{A} - \mathcal{B} K^o$, optimally stabilized LTI continuous-time system can be written as (\ref{eqn:st-state-space}). To preserve the structure of the system, it needs to back to the oginial structure (\ref{eqn:DS_matrix_vector}), and for this the submatrices $J_1, J_3$ are replaced by $J_1 - B_1 K^o, J_3 - B_2 K^o$, respectively. 

Finally, the desired optimal control $u^o = -K^o x(t)$ of the targeted power system models can be computed.  

\section{Numerical Results}

The stability of the target models is investigated and the unstable models are stabilized through the Riccati-based feedback stabilization process. The proposed methods are employed to find the solution of the Riccati equation arising from the BIPS models and corresponding feedback matrices are generated for system stabilization. Also, initial Bernoulli feedback stabilization is implemented for the convenient rate of convergence.  
All the results have been achieved using the MATLAB 8.5.0 (R2015a) on a Windows machine having Intel-Xeon Silver 4114 CPU \@$2.20$ GHz clock speed, $2$ cores each and $64$ GB of total RAM.

\subsection{Brazilian inter-connected power system models}\label{bips}

Power system models are an essential part of engineering fields that consists of simulations based on power generations and grid networks. The computation required to analyze electrical power systems employing mathematical models utilizing real-time data. There are several applications of the power system model, i.e., electric power generation, utility transmission and distribution, railway power systems, and industrial power generation \cite{martins2007computation}.

The Brazilian Inter-connected Power Systems (BIPS) is one of the most convenient examples of the power system models with various test systems \cite{freitas1999computationally}. The following Table~\ref{tab:BIPS_systems} provides the details about the models. The detailed structure of those will be found at \textit{https://sites.google.com/site/rommes/software}, where all of them are index-1 descriptor system. The models $mod-606$, $mod-1998$, $mod-2476$ and $mod-3078$ have the unstable eigenvalues, whereas the models $mod-1142$, $mod-1450$ and $mod-1963$ have stable eigenvalues \cite{leandro2015identification}. Here the names of the models are considered according to their number of states.
\begin{table}[H]
\centering
\caption{Structure of the Models derived from BIPS test systems}
\label{tab:BIPS_systems}
\begin{tabular}{|c|c|c|c|c|c|}
\hline
\begin{tabular}[c]{@{}c@{}}Test \\ systems\end{tabular} & Dimensions & States & \begin{tabular}[c]{@{}c@{}}Algebraic\\ variables\end{tabular} & Inputs & Outputs \\ \hline
\multirow{4}{*}{BIPS98} & 7135 & 606 & 6529 & 4 & 4 \\ \cline{2-6} 
		& 9735 & 1142 & 8593 & 4 & 4 \\ \cline{2-6} 
		& 11265 & 1450 & 11582 & 4 & 4 \\ \cline{2-6} 
		& 13545 & 1963 & 13068 & 4 & 4 \\ \hline
\multirow{3}{*}{BIPS07} & 15066 & 1998 & 13068 & 4 & 4 \\ \cline{2-6} 
		& 16861 & 2476 & 14385 & 4 & 4 \\ \cline{2-6} 
		& 21128 & 3078 & 18050 & 4 & 4 \\ \hline
\end{tabular}
\end{table}

\subsection{Comparison of the results found by RKSM and KN-LRCF-ADI}

The CAREs arising from the models $mod-606$, $mod-1998$ and $mod-2476$ are efficiently solved and stabilized the corresponding models by both RKSM and KN-LRCF-ADI techniques. As model $mod-3078$ is semi-stable, the computation of CARE derived from this model is not possible by LRCF-ADI techniques but by the RKSM approach model $mod-3078$ successfully stabilized and the numerical result for model $mod-3078$ is investigated for RKSM only. 
\begin{table}[H]
\centering
\caption{Results of RKSM applied BIPS models}
\label{tab:rksm}
\begin{tabular}{|c|c|c|c|c|}\hline
Model & \begin{tabular}[c]{@{}c@{}}No of \\ iterations\end{tabular} & Tolerance & \begin{tabular}[c]{@{}c@{}}Numerical\\ rank\end{tabular} & \begin{tabular}[c]{@{}c@{}}CPU time\\ (second)\end{tabular} \\ \hline
		$mod-606$ & 100 & $10^{-10}$ & 195 & $1.81 \times 10^2$ \\ \hline
		$mod-1998$ & 200 & $10^{-10}$ & 266 & $1.41 \times 10^3$ \\ \hline
		$mod-2476$ & 248 & $10^{-10}$ & 265 & $3.06 \times 10^3$ \\ \hline	$mod-3078$ & 257 & $10^{-5}$ & 295 & $3.01 \times 10^3$ \\ \hline
\end{tabular}	
\end{table}

Table~\ref{tab:rksm} depicts the numerical results of the stabilization process via RKSM for the unstable BIPS models and various properties of the stabilized systems are illustrated, whereas Table~\ref{tab:kn-adi} displays the several modes of ADI techniques in KN-LRCF-ADI method for stabilizing the unstable BIPS models including characteristics of the stabilized models. In both of the tables, Table~\ref{tab:rksm} and Table~\ref{tab:kn-adi} we have analyzed the same features of the stabilized BIPS models, we can easily compare the efficiency and robustness of the proposed methods. 

From the above tables it can be said that the proposed RKSM approach has quick convergence ability and occupies very small solution space to provide the efficient solution of the CAREs. In contrast LRCF-ADI based Kleinman-Newton has several approaches for finding the solution of CAREs, whereas most of the approaches required higher computation time. Also, there are deviations in the numerical ranks of the factored solution of CAREs in the Kleinman-Newton approaches and in all of the cases RKSM provides significantly better result.
\begin{table}[H]
\centering
\caption{Results of KN-LRCF-ADI applied BIPS models}
\label{tab:kn-adi}
\begin{tabular}{|c|c|c|c|c|}\hline
Model & \begin{tabular}[c]{@{}c@{}}Total\\ iterations\end{tabular} & Tolerance & \begin{tabular}[c]{@{}c@{}}Numerical\\ rank\end{tabular} & \begin{tabular}[c]{@{}c@{}}CPU time\\ (second)\end{tabular} \\ \hline
\multirow{4}{*}{$mod-606$} & 311 & $10^{-5}$ & 481 & $2.74 \times 10^2$ \\ \cline{2-5} 
		& 544 & $10^{-10}$ & 953 & $7.21 \times 10^2$ \\ \cline{2-5} 
		& 508 & $10^{-5}$ & 473 & $4.10 \times 10^2$ \\ \cline{2-5} 
		& 853 & $10^{-10}$ & 969 & $7.92 \times 10^2$ \\ \hline
\multirow{4}{*}{$mod-1998$} & 277 & $10^{-5}$ & 663 & $2.30 \times 10^3$ \\ \cline{2-5} 
		& 485 & $10^{-10}$ & 1201 & $5.83 \times 10^3$ \\ \cline{2-5} 
		& 514 & $10^{-5}$ & 497 & $3.27 \times 10^3$ \\ \cline{2-5} 
		& 1003 & $10^{-10}$ & 1417 & $1.17 \times 10^4$ \\ \hline
\multirow{4}{*}{$mod-2476$} & 254 & $10^{-5}$ & 473 & $3.13 \times 10^3$ \\ \cline{2-5}
		& 363 & $10^{-10}$ & 937 & $5.75 \times 10^3$ \\ \cline{2-5} 
		& 366 & $10^{-5}$ & 441 & $3.02 \times 10^3$ \\ \cline{2-5} 
		& 849 & $10^{-10}$ & 849 & $9.50 \times 10^4$ \\ \hline
\end{tabular}
\end{table}

\subsection{Stabilization of eigenvalues}

Fig.\ref{fig:eig_606}, Fig.\ref{fig:eig_1998}, and Fig.\ref{fig:eig_2476} illustrates the eigenvalue stabilization of the models $mod-606$, $mod-1998$, $mod-2476$ by both the RKSM and LRCF-ADI based Kleinamn-Newton techniques.

\begin{figure}[H]
	\centering 
	\begin{subfigure}{.95\linewidth}
		\includegraphics[width=\linewidth]{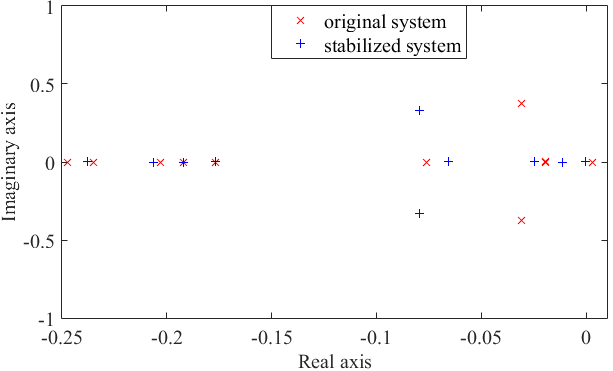}
		\vspace{0.01in}
		\caption{Stabilized by RKSM} 
		\vspace{0.1in}
		\label{fig:eig606_1}
	\end{subfigure}
	\vspace{0.1in}
	\begin{subfigure}{.95\linewidth}
		\includegraphics[width=\linewidth]{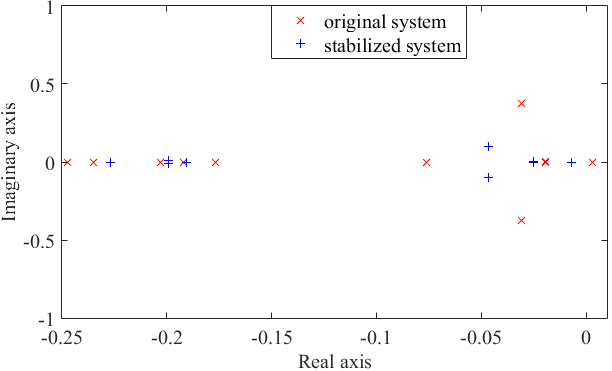}
		\vspace{0.01in}
		\caption{Stabilized by KN-LRCF-ADI} 
		\vspace{0.1in}
		\label{fig:eig606_2}
	\end{subfigure}
	\vspace{0.1in}
	\caption{Comparisons of the eigenvalues for the model $mod-606$}
	\label{fig:eig_606}
\end{figure} 
\begin{figure}[H]
	\centering 
	\begin{subfigure}{.95\linewidth}
		\includegraphics[width=\linewidth]{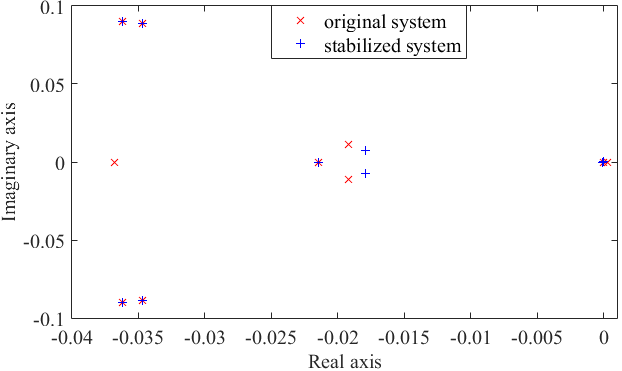}
		\vspace{0.01in}
		\caption{Stabilized by RKSM} 
		\vspace{0.1in}
		\label{fig:eig1998_1}
	\end{subfigure}
	\vspace{0.1in}
	\begin{subfigure}{.95\linewidth}
		\includegraphics[width=\linewidth]{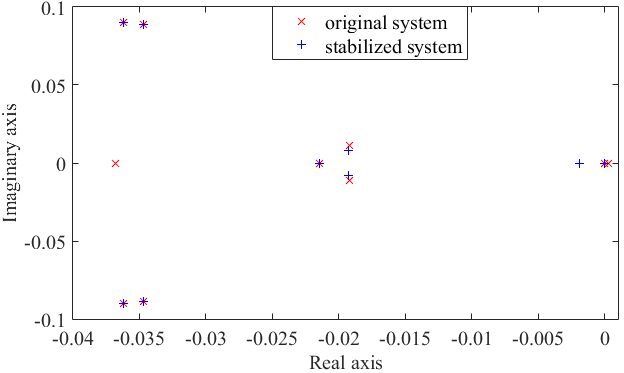}
		\vspace{0.01in}
		\caption{Stabilized by KN-LRCF-ADI} 
		\vspace{0.1in}
		\label{fig:eig1998_2}
	\end{subfigure}
	\vspace{0.1in}
	\caption{Comparisons of the eigenvalues for the model $mod-1998$}
	\label{fig:eig_1998}
\end{figure} 

From the sub-figures in the above mentioned figures, it has been observed that the eigenvalues of the models $mod-606$ and $mod-2476$ are stabilized very well but its marginal for the model $mod-1998$. Thus, it can be concluded that both the RKSM and LRCF-ADI based Kleinamn-Newton techniques have adequate efficiency to stabilize the unstable descriptor systems by closed-loop structure via Riccati based feedback stabilization. 

Fig.\ref{fig:eig_3078} displays the applicability of the RKSM for the semi-stable descriptor system, whereas LRCF-ADI based methods are ineffective in this case. Since the eigenvalues of the semi-stable model $mod-3078$ are very close to the imaginary axis, it is not possible to realize in the normal view. So, for the simulation tool capacity and visual convenience, we have considered the a very magnified view of the eigenspaces. 

\begin{figure}[H]
	\centering 
	\begin{subfigure}{.95\linewidth}
		\includegraphics[width=\linewidth]{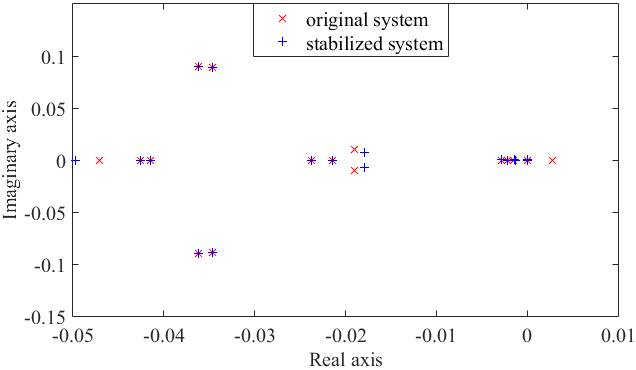}
		\vspace{0.01in}
		\caption{Stabilized by RKSM} 
		\vspace{0.1in}
		\label{fig:eig2476_1}
	\end{subfigure}
	\vspace{0.1in}
	\begin{subfigure}{.95\linewidth}
		\includegraphics[width=\linewidth]{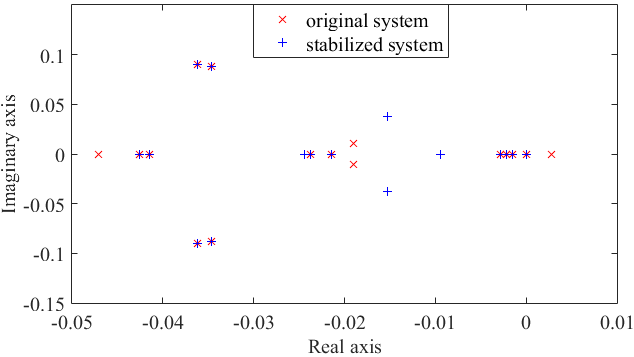}
		\vspace{0.01in}
		\caption{Stabilized by KN-LRCF-ADI} 
		\vspace{0.1in}
		\label{fig:eig2476_2}
	\end{subfigure}
	\vspace{0.1in}
	\caption{Comparisons of the eigenvalues for the model $mod-2476$}
	\label{fig:eig_2476}
\end{figure} 
\begin{figure}[H]
	\centering 
	\includegraphics[width=\linewidth]{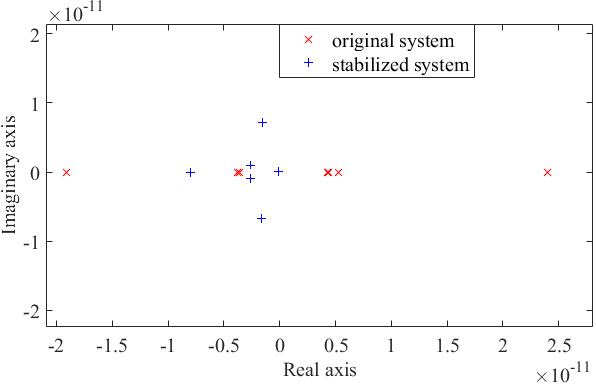}
	\vspace{0.01in}
	\caption{Comparisons of the eigenvalues (stabilized by RKSM) for the model $mod-3078$}
	\vspace{0.1in}
	\label{fig:eig_3078}
\end{figure} 

\subsection{Stabilization of step-responses}

The investigation of the figures from Fig.\ref{fig:step-res_606_1}-Fig.\ref{fig:step-res_3078_2} consist the step-responses for some dominant input/output relations to compare the RKSM and the LRCF-ADI based Kleinman-Newton approaches via the system stabilization. Since the power system models are of $4\times4$ input-output relations, there are $16$ step-responses for each models. For the effective comparison we have investigated only some graphically significant step-responses. 

\begin{figure}[H]
	\centering 
	\includegraphics[width=\linewidth]{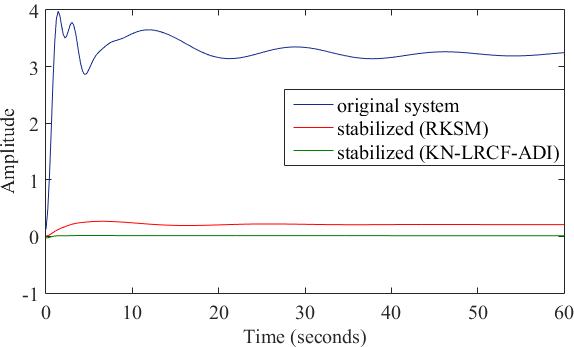}
	\vspace{0.01in}
	\caption{Comparisons of step-responses for the model $mod-606$ for the first-input/third-output}
	\label{fig:step-res_606_1}
\end{figure}
\begin{figure}[H]
	\centering 
	\includegraphics[width=\linewidth]{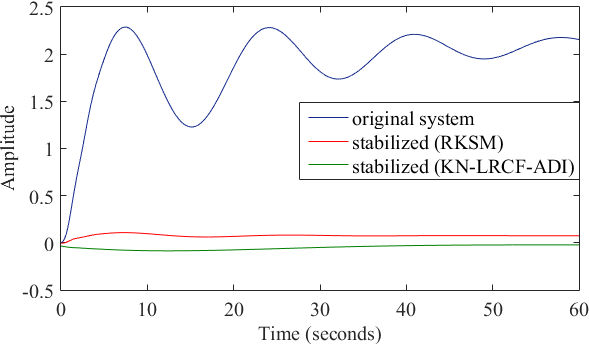}
	\vspace{0.01in}
	\caption{Comparisons of step-responses for the model $mod-606$ for the second-input/first-output}
	\label{fig:step-res_606_2}
\end{figure}
\begin{figure}[H]
	\centering
	\includegraphics[width=\linewidth]{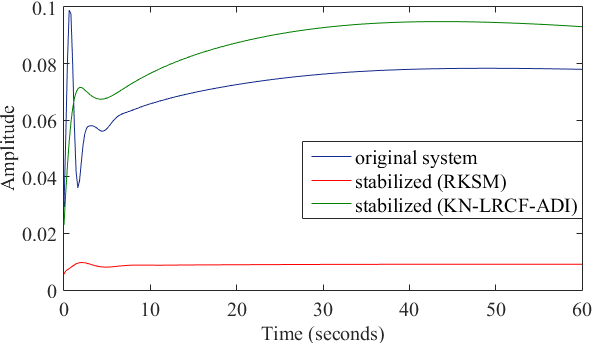}
	\vspace{0.01in} 	
	\caption{Comparisons of step-responses for the model $mod-1998$ for the third-input/second-output}
	\label{fig:step-res_1998_1}
\end{figure}
\begin{figure}[H]
	\centering
	\includegraphics[width=\linewidth]{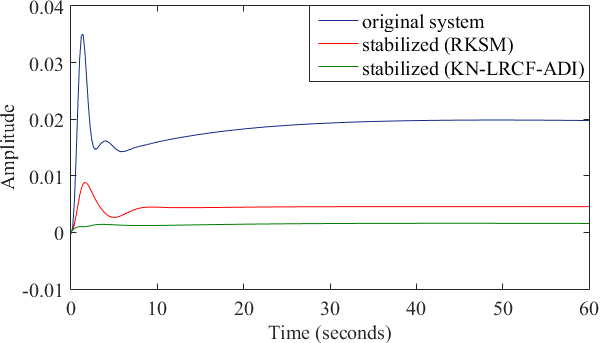}
	\vspace{0.01in} 	
	\caption{Comparisons of step-responses for the model $mod-1998$ for the fourth-input/third-output}
	\label{fig:step-res_1998_2}
\end{figure}
\begin{figure}[H]
	\centering 
	\includegraphics[width=\linewidth]{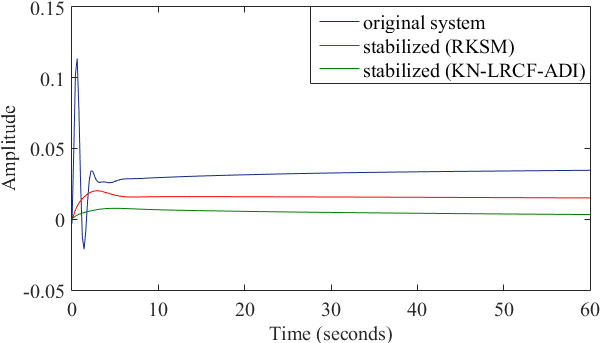}
	\vspace{0.01in}
	\caption{Comparisons of step-responses for the model $mod-2476$ for the first-input/third-output}
	\label{fig:step-res_2476_1}
\end{figure}
\begin{figure}[H]
	\centering 
	\includegraphics[width=\linewidth]{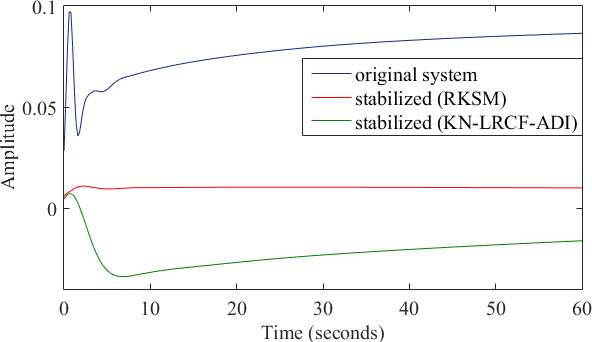}
	\vspace{0.01in}
	\caption{Comparisons of step-responses for the model $mod-2476$ for the third-input/second-output}
	\label{fig:step-res_2476_2}
\end{figure}

From Fig.\ref{fig:step-res_606_1}-Fig.\ref{fig:step-res_2476_2} it is evident that both RKSM and LRCF-ADI integrated Kleinman-Newton techniques are applicable for the Riccati based feedback stabilization of unstable power system models. The graphical comparisons indicates by RKSM is suitably robust. On the other hand, though sometimes the Kleinman-Newton approach provides very good accuracy but it has some scattered behaviors. 

Moreover, Fig.\ref{fig:step-res_3078_1} and Fig.\ref{fig:step-res_3078_2} shows the applicability of the RKSM technique for the Riccati based feedback stabilization for the semi-stable index-1 descriptor systems.  

\begin{figure}[H]
	\centering 
	\includegraphics[width=\linewidth]{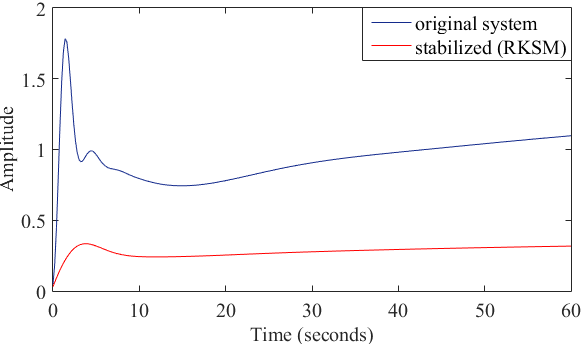}
	\vspace{0.1in}
	\caption{Comparisons of step-responses for the model $mod-3078$ for the second-input/first-output}
	\label{fig:step-res_3078_1}
\end{figure}
\begin{figure}[H]
	\centering 
	\includegraphics[width=\linewidth]{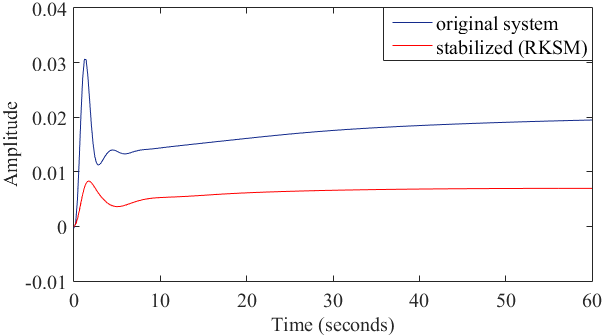}
	\vspace{0.1in}
	\caption{Comparisons of step-responses for the model $mod-3078$ for the fourth-input/third-output}
	\label{fig:step-res_3078_2}
\end{figure}

\section{Conclusion}

From the tabular and graphical comparisons of the results of numerical computations, we have observed that by both RKSM and KN-LRCF-ADI techniques, the CAREs arising from the unstable index-1 descriptor systems are efficiently solved and the corresponding models are properly stabilized. The semi-stable index-1 descriptor system is successfully stabilized through Riccati-based feedback stabilization by RKSM, whereas KN-LRCF-ADI is still not suitable for it. There are deviations of the numerical ranks of the factored solutions of CAREs in the Kleinman-Newton approaches, while RKSM provides significantly better results for all the cases. RKSM approach has quick convergence ability and occupies very small solution spaces to provide efficient solutions to the CAREs. In contrast, LRCF-ADI based Kleinman-Newton has several approaches for finding the solutions of CAREs, where almost all of the approaches required higher computation time. Riccati-based feedback stabilization for the index-1 descriptor systems by the RKSM approach is very effective and robust. Contrariwise, LRCF-ADI based Kleinman-Newton method is slightly scattered in case of the stabilization of step-responses. Thus, it can be concluded that the RKSM is suitably applicable to the unstable index-1 descriptor systems for Riccati-based feedback stabilization and this method is more preferable to the Kleinman-Newton method in the sense of computation time and memory allocation.

In this work, the MATLAB library command \texttt{care} is used in RKSM to find the solution of the CAREs governed from the reduced-order models and used the direct backward inversion technique for solving shifted sparse linear systems. In the future, we will try to find the self-sufficient RKSM algorithm for solving CAREs and apply the "Restarted Hessenberg Method" for solving shifted sparse linear systems.  Since the LRCF-ADI techniques are inefficient for the semi-stable systems, we will work for it as well.

\section{Acknowledgment}

An earlier version of it has been presented as a pre-print in "Riccati-based feedback stabilization for unstable power system models". The earlier version of this work can be found at the link https://arxiv.org/abs/2006.14210.

This work is under the project \textit{"Computation of Optimal Control for Differential-Algebraic Equations (DAE) with Engineering Applications"}. This project is funded by United International University, Dhaka, Bangladesh. It starts from October 01, 2019, and the reference is IAR/01/19/SE/18. 

\bibliographystyle{IEEEtran}
\bibliography{Ref_MUn}

\end{document}